\definecolor{red}{RGB}{255,25,25}
\definecolor{blue}{RGB}{25,50,200}
\newtheorem{theorem}{Theorem}[section]
\crefname{theorem}{Theorem}{Theorems}
\newtheorem{lemma}[theorem]{Lemma}
\crefname{lemma}{Lemma}{Lemmas}
\crefname{proposition}{Proposition}{Propositions}
\newtheorem{prop}[theorem]{Proposition}
\crefname{prop}{Proposition}{Propositions}
\crefname{corollary}{Corollary}{Corollaries}
\crefname{cor}{Corollary}{Corollaries}
\crefname{conjecture}{Conjecture}{Conjectures}
\newtheorem{conj}[theorem]{Conjecture}
\crefname{conj}{Conjecture}{Conjectures}
\newtheorem*{conj*}{Conjecture}
\crefname{conj}{Conjecture}{Conjectures}
\theoremstyle{definition}
\crefname{definition}{Definition}{Definitions}
\newtheorem{defn}[theorem]{Definition}
\crefname{defn}{Definition}{Definitions}
\crefname{example}{Example}{Examples}
\newtheorem{notation}[theorem]{Notation}
\crefname{notation}{Notation}{Notation}
\newtheorem*{notation*}{Notation}
\crefname{notation}{Notation}{Notation}
\newtheorem{problem}[theorem]{Problem}
\crefname{problem}{Problem}{Problems}
\newtheorem{question}[theorem]{Question}
\crefname{question}{Question}{Questions}
\crefname{condition}{Condition}{Conditions}
\crefname{assumption}{Assumption}{Assumptions}
\theoremstyle{remark}
\crefname{rmk}{Remark}{Remarks}
\newtheorem*{rmk*}{Remark}
\crefname{rmk}{Remark}{Remarks}
\crefname{remark}{Remark}{Remarks}
\crefname{fact}{Fact}{Facts}
\newtheorem{claim}[theorem]{Claim}
\crefname{claim}{Claim}{Claims}
\newtheorem*{claim*}{Claim}
\crefname{claim}{Claim}{Claims}
\crefname{step}{Step}{Steps}
\crefname{case}{Case}{Cases}
\numberwithin{equation}{section}
\newcommand{\bC}{\mathbf{C}}
\newcommand{\bQ}{\mathbf{Q}}
\newcommand{\bR}{\mathbf{R}}
\newcommand{\bZ}{\mathbf{Z}}
\begin{document}

\title[surface with Picard number 2]{A note on a smooth projective surface with Picard number 2}

\author{Sichen Li}
\address{School of Mathematical Science, Shanghai Key Laboratory of PMMP, East China Normal University, Math. Bldg , No. 500, Dongchuan Road, Shanghai, 200241, P. R. China
\endgraf Department of Mathematics, National University of Singapore, Block S17, 10 Lower Kent Ridge Road, Singapore 119076}
\email{\href{mailto:lisichen123@foxmail.com}{lisichen123@foxmail.com}}
\urladdr{\url{https://www.researchgate.net/profile/Sichen_Li4}}
\begin{abstract}
We characterize the integral Zariski decomposition of a smooth projective surface with Picard number 2 to partially solve  a problem of B. Harbourne, P. Pokora, and H. Tutaj-Gasinska [Electron. Res. Announc. Math. Sci. 22 (2015), 103--108].
\end{abstract}

\subjclass[2010]{
primary 14C20
}


\keywords{integral Zariski decomposition, Picard number 2, K3 surface}

\thanks{The Research was partially supported by the National Natural Science Foundation of China (Grant No. 11471116, 11531007), Science and Technology Commission of Shanghai Municipality (Grant No. 18dz2271000) and the China Scholar Council `High-level university graduate program'.}

\maketitle


\section{Introduction}
In this note we work over the field $\bC$ of complex numbers. By a {\it negative~ curve} on a surface we will always mean a reduced, irreducible curve with negative self-intersection. By a {\it (-k)-curve}, we mean a negative curve $C$ with $C^2=-k<0$.

The bounded negativity conjecture  is one of the most intriguing problems in the theory of projective surfaces and can be formulated as follows.
\begin{conj}\cite[Conjecture 1.1]{B.etc.13}
\label{BNC}
For each smooth complex projective surface $X$ there exists a number $b(X)\ge0$ such that  $C^2\ge-b(X)$ for every negative curve $C\subseteq X$.
\end{conj}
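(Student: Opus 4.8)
The plan is to approach \Cref{BNC} through the Enriques--Kodaira classification, reducing an arbitrary smooth projective surface $X$ to a relatively minimal model $X_0$ and treating the resulting birational classes one at a time. Several of these are dispatched immediately by adjunction. Indeed, for any reduced irreducible curve $C$ the adjunction formula gives
\begin{equation*}
C^2 = 2p_a(C) - 2 - K_X\cdot C \ge -2 - K_X\cdot C,
\end{equation*}
so that a lower bound on $C^2$ follows from an upper bound on $K_X\cdot C$. When $-K_X$ is $\bbQ$-effective --- say $-mK_X$ is linearly equivalent to an effective divisor $\sum_i a_i D_i$ with $a_i>0$ --- every negative curve $C$ distinct from the finitely many $D_i$ meets each $D_i$ non-negatively, whence $K_X\cdot C\le 0$ and $C^2\ge -2$; the finitely many $D_i$ themselves have self-intersection bounded below by a constant. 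This settles at once the surfaces with $K_X\equiv 0$ (K3, abelian, Enriques, bielliptic) and the del Pezzo surfaces, and more generally every surface with $\bbQ$-effective anticanonical class. A second family is handled by a separate mechanism: surfaces admitting a surjective endomorphism $f$ that is not an isomorphism are known to have bounded negativity, via the scaling behaviour of the intersection form under $f^{*}$, which multiplies self-intersections by $\deg f>1$.

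The real work lies in transferring bounded negativity from $X_0$ back to $X$ along the blow-down $\pi\colon X\to X_0$, and here the naive reduction breaks down. If $\pi$ factors as a sequence of point blow-ups with exceptional configuration $\{E_i\}$, then a negative curve $C$ on $X$ is the strict transform of its image $C_0=\pi(C)$, and
\begin{equation*}
C^2 = C_0^2 - \sum_i m_i^2,
\end{equation*}
where the $m_i$ are the multiplicities of $C_0$ at the (possibly infinitely near) centres. Thus even when $C_0^2$ is controlled on $X_0$, the self-intersection $C^2$ can be driven arbitrarily negative by curves passing through the blown-up points with large multiplicity. A uniform bound on $C^2$ is therefore equivalent to a uniform bound on these multiplicities, that is, to the assertion that an irreducible curve cannot be too singular at a prescribed finite set of points relative to its degree.

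\textbf{The main obstacle} is precisely this multiplicity bound for \emph{rational surfaces}, above all for blow-ups of $\bbP^2$ at a large number of very general points, where $-K_X$ ceases to be $\bbQ$-effective and both of the mechanisms above fail. In this setting the sought-after control of $C^2$ is intimately tied to the Nagata conjecture and to the Segre--Harbourne--Gimigliano--Hirschowitz conjecture governing the dimensions and singularities of plane curves with assigned base points; no unconditional argument is presently known, which is exactly why \Cref{BNC} remains open. A complete proof along these lines would require establishing such a multiplicity bound unconditionally --- a problem at least as hard as Nagata's --- and pending that, the only available recourse is to verify \Cref{BNC} in those classes where the cone of curves $\NE(X)$ is explicitly understood and the list of negative curves, hence the constant $b(X)$, can be read off directly.
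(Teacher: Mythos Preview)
The statement you are addressing is a \emph{conjecture}, not a theorem: the paper does not prove it and does not attempt to. It is quoted from the literature as an open problem motivating the rest of the article; the only result in the paper in this direction is the elementary \cref{2.6}, that bounded negativity holds when $\rho(X)=2$, proved by observing that such a surface carries at most two negative curves (\cref{2.7}).

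Your write-up is an accurate summary of the standard heuristic and of its failure point --- adjunction disposes of surfaces with $K_X\equiv 0$ or $-K_X$ $\bbQ$-effective, while for blow-ups of $\bbP^2$ at many general points the required multiplicity bound is entangled with Nagata and SHGH --- but it is not a proof, and you explicitly say so yourself. There is therefore nothing to compare against: the paper contains no proof of this conjecture, and your proposal correctly explains why none is presently available.
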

Let us say that a smooth projective surface  $X$ has
\begin{equation*}
b(X)>0
\end{equation*}
 if there is at least one negative curve on $X$.

In \cite{BPS17}, T. Bauer, P. Pokora and D. Schmitz established the following theorem.
\begin{theorem}\cite[Theorem]{BPS17}\label{BPS}
For a smooth projective surface $X$ over an algebraically closed field the following two statements are equivalent:
\begin{enumerate}
  \item $X$ has bounded Zariski denominators.
  \item $X$ satisfies  \cref{BNC}.
\end{enumerate}
\end{theorem}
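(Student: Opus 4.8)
The plan is to make the rational coefficients occurring in a Zariski decomposition visible as entries of the inverse of an integral negative-definite intersection matrix, and then to control that matrix by bounded negativity. Write an integral pseudoeffective divisor as $D = P + N$ with $N = \sum_{i=1}^{r} a_i C_i$ its negative part. Since $P$ is nef with $P\cdot C_i = 0$ for every $i$, the coefficients satisfy the linear system $D\cdot C_j = \sum_{i} a_i\,(C_i\cdot C_j)$, whose coefficient matrix $M = (C_i\cdot C_j)$ is symmetric with integer entries and is negative definite (this is part of the defining property of the negative part). In particular $M$ is invertible, so by Cramer's rule $a_i \in \tfrac{1}{|\det M|}\,\bZ$; hence the Zariski denominator of $D$ divides $|\det M|$. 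This reduces both implications to comparing $|\det M|$ with the self-intersections of negative curves.

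For $(2)\Rightarrow(1)$ I would argue as follows. Assuming \cref{BNC}, fix $b = b(X)$ with $C_i^2 \ge -b$ for every negative curve. Negative definiteness of $M$ forces every principal $2\times 2$ minor to be positive, so $(C_i\cdot C_j)^2 < C_i^2\,C_j^2 \le b^2$ and thus $|C_i\cdot C_j| < b$ for all $i,j$. Moreover the $C_i$ are linearly independent in $\NS(X)_{\bR}$ (a negative-definite form is nondegenerate), so $r \le \rho(X)$, the Picard number. Thus $M$ ranges over integer symmetric matrices of size at most $\rho(X)$ with entries bounded by $b$, and Hadamard's inequality gives a uniform bound $|\det M| \le (\sqrt{\rho(X)}\,b)^{\rho(X)}$. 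By the previous paragraph the Zariski denominators are bounded by this number, which is exactly statement (1).

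For $(1)\Rightarrow(2)$ I would proceed by contraposition: assuming \cref{BNC} fails, I choose negative curves $C$ with $k := -C^2 \to \infty$ and manufacture integral divisors whose Zariski denominators tend to infinity. The natural candidate is $D = C + A$ for a suitable integral ample class $A$. One checks directly that $P := A + \tfrac{A\cdot C}{k}\,C$ is nef — indeed $P\cdot C = 0$, while $P\cdot\Gamma = A\cdot\Gamma + \tfrac{A\cdot C}{k}(C\cdot\Gamma) > 0$ for every irreducible $\Gamma \ne C$, so Kleiman's criterion applies — and $N := \tfrac{k - A\cdot C}{k}\,C$ is effective as soon as $A\cdot C < k$. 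Since $P$ is nef, $N$ is effective with negative-definite ($1\times 1$) support, and $P\cdot C = 0$, the uniqueness of the Zariski decomposition identifies $D = P + N$ as the decomposition of $D$, with denominator $k/\gcd(A\cdot C,\,k)$.

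The hard part will be this last construction: to force the denominator to actually grow, I must choose the auxiliary class $A$, depending on $C$, so that $A\cdot C$ is both smaller than $k$ and coprime to $k$ (then the denominator is exactly $k$). This is where the genuine work lies, since a single fixed polarization need not satisfy $A\cdot C < k$, and coprimality must be arranged; the remedy is to exploit the freedom in the ample cone, analyzing the values taken by the linear functional $\,\cdot\,C$ on integral ample classes modulo $k$ and selecting a representative coprime to $k$. Verifying that a suitable $A$ always exists — and hence that the denominators are genuinely unbounded as $k \to \infty$ — is the crux of the argument; the two determinant estimates above are then routine.
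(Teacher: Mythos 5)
First, a framing remark: the paper you were asked about does not prove this theorem at all — it is quoted from \cite{BPS17} — so your attempt can only be measured against the argument in that reference. Your direction $(2)\Rightarrow(1)$ is complete and correct, and it is essentially the standard argument: Cramer's rule bounds the Zariski denominator by $|\det M|$, bounded negativity together with negative definiteness bounds all entries of $M$ by $b$, linear independence of the $C_i$ bounds the size of $M$ by $\rho(X)$, and Hadamard's inequality finishes.

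The genuine gap is in $(1)\Rightarrow(2)$, and while you flag it honestly, the remedy you sketch cannot work as stated. You want, for each negative curve $C$ with $k=-C^2$, an integral ample $A$ with $A\cdot C$ coprime to $k$. But the values of the linear functional $D\mapsto D\cdot C$ on $\operatorname{NS}(X)$ form a subgroup $m\bZ$, and since $C\cdot C=-k$ is such a value, $m\mid k$. Whenever $m>1$, \emph{every} integral class (ample or not) satisfies $\gcd(A\cdot C,k)\ge m>1$, so no representative coprime to $k$ exists, and "analyzing the values modulo $k$ in the ample cone" cannot produce one. This situation occurs on actual surfaces — e.g.\ K3 surfaces whose intersection form has all entries even, such as the lattices $\begin{pmatrix} -2&b\\ b&-2\end{pmatrix}$ with $b$ even appearing in Claim 2.12 of this very paper. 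Consequently the largest denominator your construction can produce is $k/m$, not $k$, and bounded denominators a priori only give $k/m\le d$, which by itself does not bound $k$. (By contrast, your other worry, that $A\cdot C<k$ may fail, is harmless: replace $D=C+A$ by $D=nC+A$ with $n\gg0$; the denominator $k/\gcd(A\cdot C,k)$ is unchanged.)

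What is missing is a lattice-theoretic step converting the bound on $k/m$ into a bound on $k$: since $C/m$ pairs integrally with all of $\operatorname{NS}(X)$, it lies in the dual lattice, whose intersection form takes values in $\frac{1}{|\Delta(X)|}\bZ$, where $\Delta(X)$ is the discriminant of the N\'eron--Severi lattice; hence $(C/m)^2=-k/m^2$ gives $m^2\mid |\Delta(X)|\,k$, and combined with $k\le dm$ this yields $k\le |\Delta(X)|\,d^2$, i.e.\ bounded negativity. (One also needs the complementary observation that $\gcd(A\cdot C,k)=m$ is actually achieved by some integral ample $A$, e.g.\ $A=tH+D_0$ with $D_0\cdot C=m$ and $t\equiv 0 \pmod k$, $t\gg 0$.) Without this ingredient — or an equivalent one, as in the proof in \cite{BPS17} — your contrapositive does not go through, so the proposal as written is incomplete in this direction.
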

Let us say that a smooth projective surface  $X$ has
\begin{equation*}
d(X)=1
 \end{equation*}
 if every {\it pseudo-effective divisor} $D$ (cf. \cite[Definition 2.2.25]{Laz04}) on $X$ has an integral Zariski decomposition (cf.  \cref{Fuj-Zar}). An interesting criterion for surfaces to have bounded Zariski denominators was given in \cite{BPS17} as follows.
 \begin{prop}\cite[Proprostion 1.2]{HPT15}\label{1.3}
Let $X$ be a smooth projective surface such that for every curve $C$ one has ${C}^2\ge-1$. Then $d(X)=1$.
\end{prop}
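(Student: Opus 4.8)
The plan is to take an arbitrary integral pseudo-effective divisor $D$ on $X$ and examine its Zariski decomposition $D = P + N$, where $P$ is nef, the negative part $N = \sum_{i=1}^{k} a_i N_i$ is effective with rational coefficients $a_i \ge 0$, the intersection matrix $M = (N_i \cdot N_j)_{i,j}$ is negative definite, and $P \cdot N_i = 0$ for every $i$. The existence and uniqueness of this decomposition is the classical Zariski--Fujita theorem; what remains is to prove that the coefficients $a_i$ are in fact integers, for then $N$ is an integral divisor and so is $P = D - N$, giving $d(X) = 1$.

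First I would note that each component $N_i$ of the negative part is a negative curve: the diagonal entry $N_i^2$ of the negative definite matrix $M$ is negative, so $N_i^2 < 0$. Invoking the hypothesis $C^2 \ge -1$ for every curve $C$, this pins down $N_i^2 = -1$ for all $i$, so every $N_i$ is a $(-1)$-curve.

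The crucial step is to show that the $N_i$ are pairwise orthogonal. For two distinct irreducible components, write $m = N_i \cdot N_j \ge 0$ (distinct irreducible curves meet non-negatively). The $2 \times 2$ principal submatrix $\left(\begin{smallmatrix} -1 & m \\ m & -1 \end{smallmatrix}\right)$ of $M$ inherits negative definiteness, so its determinant $1 - m^2$ must be positive, forcing $m = 0$. Hence $M = -I_k$ is diagonal. The relations $P \cdot N_i = 0$ now read $D \cdot N_i = N \cdot N_i = \sum_j a_j (N_j \cdot N_i) = -a_i$, so $a_i = -\,D \cdot N_i \in \bbZ$; the Zariski decomposition already guarantees $a_i \ge 0$. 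Thus $N$, and therefore $P$, are integral.

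I expect the disjointness observation to be the main (though short) obstacle: the whole argument hinges on recognizing that negative definiteness of $M$ together with the constraint $N_i^2 = -1$ forces every off-diagonal intersection number to vanish, collapsing $M$ to $-I_k$ and trivializing the linear system that computes the coefficients. Everything else is routine linear algebra over $\bbZ$.
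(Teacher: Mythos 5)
Your proof is correct, and it is essentially the argument behind the cited result: the paper itself gives no proof of this proposition (it is quoted from \cite{HPT15}), and the proof there proceeds exactly as you do — negative definiteness plus the hypothesis $C^2\ge -1$ forces the negative part to consist of pairwise disjoint $(-1)$-curves, after which the orthogonality relations $P\cdot N_i=0$ yield $a_i=-D\cdot N_i\in\bbZ_{\ge0}$. No gaps; the only point worth making explicit is that $D$ is taken to be an integral pseudo-effective divisor, which is indeed the convention in the definition of $d(X)=1$.
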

The above proposition introduces a  converse question:
\begin{question} \cite[Question]{HPT15} \label{1.4}
Let $X$ be a smooth projective surface  with $d(X)=1$. Is every negative curve then a (-1)-curve?
\end{question}
  In \cite{HPT15}, the authors  disproved   \cref{1.4} by giving a K3 surface $X$ with $d(X)=1$, Picard number $\rho(X)=2$  and two (-2)-rational curves (cf. \cref{2.12}). However, for a smooth projective surface $X$ with $|\Delta(X)|=1$, sometimes the answer for \cref{1.4} is affirmative, where $\Delta(X)$ is the determinant of the intersection form on the $\mathrm{N\acute{e}ron}$-Severi lattice of $X$. They end by giving the following problem.
\begin{problem}\cite[Problem 2.3]{HPT15}\label{1.5}
Classify all algebraic surfaces with $d(X)=1$.
\end{problem}
To solve  \cref{1.5} partially, for the case when $\rho(X)=2$, we give our main theorem as follows.
\begin{theorem} \label{1.6}
Let $X$ be a smooth projective surface with Picard number 2. If  $b(X)>0$ and $d(X)=1$, then  the following statements hold.
 \begin{enumerate}
\item[(1)] $X$ has at most two negative curves.
 \item[(2)] If $X$ has two negative curves, then $X$ must be one of the following types:  K3 surface,  surface of general type, or one point blow-up of either an abelian surface or a K3 surface with Picard number 1.
\item[(3)] For every negative curve $C$ and every another curve $D$ on $X$, the intersection number, $(C\cdot D)$ is divisible by the self-intersection number $C^2$, i.e. , $C^2|(C\cdot D)$.
\item[(4)]  If the Kodaira dimension $\kappa(X)=-\infty$, then X is a ruled surface with invariant $e=1$ or one point blow up of $\mathbb P^2$.
\item[(5)]  If $\kappa(X)=0$ and the canonical divisor $K_X$ is nef, then  X is a K3 surface admitting an intersection form on the $N\acute{e}ron$-Severi lattice of X  which is
  \[  \begin{pmatrix} a&b\\b&-2\end{pmatrix}\]
   where $a\in\big\{0,-2\big\}$ and $b+a\in 2\bZ_{>0}$.
    \item[(6)]  If $\kappa(X)=1$, then $X$ has exactly one negative curve $C$ and every singular fibre is irreducible. In particular, if every fibre is of type $mI_0$, then the genus $g(C)\ge2$. Here, $mI_0$ is one type in Kodaira's table of singular fibres \textup{(cf. \cite[V.7. Table 3]{BHPV04})}.
 \end{enumerate}
 \end{theorem}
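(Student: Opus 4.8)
The plan is to translate the hypotheses into the arithmetic of the rank-$2$ lattice $N^1(X)$ and then feed the resulting numerical constraints into the Enriques–Kodaira classification. Since $\rho(X)=2$, the Hodge index theorem shows the intersection form has signature $(1,1)$, so $\Eff(X)$ is a closed $2$-dimensional cone with two extremal rays. For (1) I would use only this: distinct irreducible curves meet non-negatively whereas a negative curve has negative square, so among three negative curves the one whose class lies between the other two would be a non-negative combination of them and hence have non-negative self-intersection, which is impossible. For (3) I would invoke $d(X)=1$ in full strength: for a negative curve $C$ with $C^2=-k$ and any integral class $V$ in the subcone bounded by $[C]$ and the nef ray $C^{\perp}$, the Zariski decomposition is $V=P+aC$ with $a=(V\cdot C)/C^2$, since $P:=V-aC$ is nef with $P\cdot C=0$ there. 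Integrality forces $k\mid(V\cdot C)$; because $C^2\neq 0$ this subcone is genuinely two-dimensional, so $V\cdot C$ attains the generator $\delta$ of the content ideal $\{C\cdot W:W\in N^1(X)\}$, giving $k\mid\delta$. As $\delta\mid C^2$ as well, $\delta=k$ and hence $C^2\mid(C\cdot D)$ for all $D$, which is (3).

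Next I would run the classification case by case, using adjunction $2g(C)-2=C^2+K_X\cdot C$ together with (3). If $\kappa(X)=-\infty$, then with $\rho(X)=2$ the surface is a one-point blow-up of $\bP^2$ or a geometrically ruled surface $\bP(\cE)\to B$; writing $C_0$ for the minimal section ($C_0^2=-e$) and $F$ for a fibre, $b(X)>0$ rules out $e=0$ (on $\bP^1\times\bP^1$ every curve has non-negative square), and (3) with $D=F$ gives $C_0^{2}\mid(C_0\cdot F)=1$, forcing $e=1$; this is (4). If $\kappa(X)=0$ and $K_X$ is nef, then $K_X\equiv 0$ numerically and adjunction makes every negative curve a $(-2)$-curve; abelian and bielliptic surfaces carry no negative curve (they contain no rational curve and every curve has $C^2=2g(C)-2\ge0$) and Enriques surfaces have $\rho=10$, so $b(X)>0$ forces $X$ to be a K3 surface. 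In a basis $(e_1,C)$ with $C^2=-2$, the effective boundary of a K3 is spanned either by $(-2)$-curves or by null classes, so $a:=e_1^2\in\{0,-2\}$, while (3) and non-degeneracy of the form make $b:=e_1\cdot C$ a positive even integer with $b+a>0$; this is (5).

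Statement (2) then follows formally: two negative curves exclude $\kappa=-\infty$ and $\kappa=1$, each of which by (4) and (6) has a \emph{unique} negative curve, leaving $\kappa\in\{0,2\}$; a minimal such $X$ is of general type or a K3, and a non-minimal one is, for reasons of Picard number, a single blow-up of a minimal surface of Picard number $1$ and Kodaira dimension $0$, that is, of an abelian or a K3 surface. For (6) I would first note that a properly elliptic surface with $\rho=2$ is relatively minimal (a minimal one already has $\rho\ge2$, so a blow-up would have $\rho\ge3$), giving a fibration $\pi\colon X\to B$ with $K_X\equiv\lambda F$, $\lambda>0$, and $F^2=0$. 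The key structural input is that $F^{\perp}=\bR F$ in this rank-$2$ lattice, so any curve $D$ with $F\cdot D=0$ has $D^2=0$; hence no negative curve lies in a fibre, every fibre is irreducible, and every negative curve is a multisection with $F\cdot C>0$. If there were two negative curves $C_1,C_2$ they would span $\Eff(X)$, forcing $F=x_1C_1+x_2C_2$ with $x_i>0$ and then $0=F^2=x_1(F\cdot C_1)+x_2(F\cdot C_2)>0$, which is absurd; with $b(X)>0$ this yields exactly one negative curve $C$.

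For the final clause I would analyse the multisection $C$ directly. When every fibre is of type $mI_0$ all fibres are smooth, possibly multiple, and at a multiple fibre $m_iF_i$ the pullback of a local base coordinate vanishes to order $m_i$ along $F_i$, so $\pi|_C\colon C\to B$ is ramified to index $m_i$ at each of its $d/m_i$ points over that fibre, where $d=F\cdot C$. Riemann–Hurwitz then gives $2g(C)-2\ge d\big(2g(B)-2+\sum_i(1-1/m_i)\big)=\lambda d>0$, hence $g(C)\ge2$. I expect the main obstacle to be precisely this last step: pinning down the ramification of $\pi|_C$ over the multiple fibres and reconciling the Riemann–Hurwitz estimate with adjunction $2g(C)-2=C^2+\lambda d$ is delicate, since the two together tightly constrain $C^2$ and are what ultimately force the genus bound. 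The divisibility statement (3), on which (4) and (5) rest, is the other technical crux, and it is there that the full force of $d(X)=1$ is needed.
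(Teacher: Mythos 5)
Your proposal is correct in substance, and in several places it takes a genuinely different route from the paper. For part (3), the paper (\cref{2.3}) fixes a second curve $C_2$, considers $D(m_1,m_2)=m_1C_1+m_2C_2$ with $m_1,m_2$ coprime, identifies its Zariski decomposition via Fujita's lemma, and chooses $m_2$ prime to $C_1^2$ to force integrality; you instead work with arbitrary integral classes in the Zariski chamber spanned by $[C]$ and the nef ray in $C^{\perp}$ and run a lattice-content argument. Your version is sound (the nef ray in $C^{\perp}$ exists because the nef cone is dual to the two-ray effective cone, and the slice $\{V\cdot C=-\delta\}$ of the chamber is an unbounded ray, so it contains lattice points), and it proves the stronger statement $C^2\mid C\cdot W$ for \emph{every} integral class $W$. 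For part (2), the paper (\cref{2.11}) writes $K_X\equiv a_1C_1+a_2C_2$ and analyses the signs of $a_1,a_2$, using bigness of interior classes and Castelnuovo contraction; you instead exclude $\kappa=-\infty,1$ by (4) and (6) and classify $\kappa\in\{0,2\}$ through minimal models. This is also valid, but note it silently uses that a ruled surface with $e=1$ and the one-point blow-up of $\bP^2$ carry a \emph{unique} negative curve, and it requires (6) to be proved independently of (2) --- which you do: your derivation of irreducibility of fibres and uniqueness of the negative curve from $F^{\perp}=\bR F$ is cleaner and more self-contained than \cref{2.14}, whose contradiction is obtained by cross-reference. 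Parts (1), (4), (5) are essentially as in the paper; in (4) your parenthetical treats only $e=0$ over $\bP^1$, whereas one should quote the full fact that geometrically ruled surfaces with $e\le0$ have no negative curves, and in (5) you implicitly use Kov\'acs's description of $\NE(X)$ for a K3 with $\rho=2$, which the paper cites as [Kov94].

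The one genuine gap is in the final clause of (6). You define $\lambda$ by $K_X\equiv\lambda F$ and then assert $2g(B)-2+\sum_i(1-1/m_i)=\lambda$. The canonical bundle formula gives $\lambda=2g(B)-2+\chi(\cO_X)+\sum_i(1-1/m_i)$, so your equality needs $\chi(\cO_X)=0$, which is not automatic for a properly elliptic surface: for a Dolgachev surface ($g(B)=0$, $\chi(\cO_X)=1$, multiplicities $2,3$) the bracket equals $-5/6<0$ while $\lambda=1/6>0$, so your Riemann--Hurwitz bound would prove nothing. The missing input is exactly the paper's \cref{2.13}: all fibres of type $mI_0$ give $c_2(X)=0$, hence $\chi(\cO_X)=\tfrac{1}{12}\bigl(K_X^2+c_2(X)\bigr)=0$ by Noether's formula; with this patch your argument closes, independently of the paper's appeal to [B.etc.13, Theorem 2.4]. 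Once patched, your route in fact shows more than you claim: Riemann--Hurwitz applied to the normalization $\wtilde{C}\to B$ gives $2g(\wtilde{C})-2\ge\lambda d$, while adjunction gives $2p_a(C)-2=C^2+\lambda d$ with $p_a(C)\ge g(\wtilde{C})$, and together these force $C^2\ge0$. So under the hypotheses of the clause ($b(X)>0$ and every fibre of type $mI_0$) no negative curve can exist at all, and the statement is vacuously true; this is the precise content of the ``delicate reconciliation'' you flagged at the end, and it is invisible from the paper's route.
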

It is well-known that the following SHGH conjecture implies Nagata's conjecture  (cf. \cite[p.772]{Nag59}), which is motivated by Hilbert's 14-th problem.
\begin{conj}\textup{(cf. \cite[Conjectures 1.1, 2.3]{C.etc.13})}\label{1.7}
Let $X$ be  a composite of blow-ups of $\mathbb P^2$ at  points $p_1,\cdots, p_n$ in very general position. Then, every negative curve on $X$ is a (-1)-rational curve.
 \end{conj}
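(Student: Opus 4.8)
This is the Segre--Harbourne--Gimigliano--Hirschowitz (SHGH) conjecture, which remains open; what follows is a description of the route one would attempt rather than a complete argument. Write $\Pic(X) = \bbZ H \oplus \bbZ E_1 \oplus \cdots \oplus \bbZ E_n$, where $H$ is the pullback of a line and the $E_i$ are the exceptional divisors, so that $K_X = -3H + \sum_{i=1}^n E_i$. For an irreducible curve $C = dH - \sum_i m_i E_i$ adjunction gives $C^2 + C\cdot K_X = 2p_a(C) - 2$, and $C$ is a $(-1)$-curve precisely when $C^2 = C\cdot K_X = -1$, equivalently $C^2 = -1$ and $p_a(C) = 0$. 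Thus the assertion is equivalent to: for very general $p_1,\dots,p_n$ no irreducible curve satisfies $C^2 \le -2$. For $n \le 9$ this is classical, $X$ being a (weak) del Pezzo or rational elliptic surface whose effective cone is generated by $(-1)$-curves; the substance lies in $n \ge 10$.

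First I would exploit the symmetry of a very general configuration. The condition ``very general'' means that $(p_1,\dots,p_n)$ lies outside a countable union of proper closed subsets of $(\bbP^2)^n$, and the existence of an effective curve in a fixed class is a closed condition on the configuration. The quadratic Cremona transformation centred at three of the points acts on $\Pic(X)$ as an isometry fixing $K_X$, sending $dH - \sum m_i E_i$ to $d'H - \sum m'_i E_i$ with $d' = 2d - m_i - m_j - m_k$, and preserving irreducibility for a very general configuration. Ordering $m_1 \ge \cdots \ge m_n \ge 0$, whenever $m_1 + m_2 + m_3 > d$ this move strictly lowers $d$, so iterating (together with permutations) reduces $C$ to a \emph{standard} class $C_0$ with $d_0 \ge m_{0,1} + m_{0,2} + m_{0,3}$. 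Since $C^2$ is a Cremona invariant, $C_0^2 = C^2 \le -2$, and $C_0$ is again effective and irreducible.

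It then remains to show that no standard class of self-intersection $\le -2$ is effective. For any irreducible curve the arithmetic genus satisfies $p_a(C) \ge 0$, so combining this with adjunction the virtual dimension of the complete linear system, $\tfrac12(C^2 - C\cdot K_X) = C^2 - p_a(C) + 1$, is $\le -1$ once $C^2 \le -2$. Since $K_X - C$ has negative $H$-degree it is not effective, whence $h^2(\cO_X(C)) = 0$; so if $|C|$ were non-special (i.e.\ $h^1(\cO_X(C)) = 0$), the negative virtual dimension would force $h^0(\cO_X(C)) = 0$, contradicting effectivity. For a standard class the requisite non-speciality is exactly the Harbourne--Hirschowitz conjecture, that the general member meets every $(-1)$-curve non-negatively and carries no multiple $(-1)$-curve in its base locus. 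One could instead try to verify $h^0(\cO_X(C)) = 0$ directly through the Ciliberto--Miranda degeneration of $\bbP^2$ into a union of two planes, distributing the $p_i$ with their multiplicities and computing the limiting dimensions inductively.

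The hard part is precisely this last vanishing, and it is the reason the conjecture is open. After Cremona reduction the admissible data $(d; m_1,\dots,m_n)$ of a standard class of negative self-intersection still form an uncontrolled infinite family, and neither the cohomological non-speciality statement nor the degeneration method is known to cover all of them: the degenerations settle many infinite families but have not been shown to terminate with the transversality one needs in general. Thus the strategy reorganises SHGH into a single sharp speciality statement, but establishing that statement is exactly what no current technique achieves.
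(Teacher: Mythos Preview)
The statement is labeled as a \emph{conjecture} in the paper, and the paper makes no attempt to prove it; it is simply quoted from the literature and used to motivate Propositions~1.8 and~1.9. You correctly recognise it as the open SHGH conjecture and explicitly present only a strategy rather than a proof, so there is no discrepancy: neither you nor the paper claims a proof, and there is nothing in the paper to compare your outline against. Your sketch of the standard approach (Cremona reduction to standard classes, then the non-speciality/Harbourne--Hirschowitz step or degeneration methods) is an accurate summary of the known line of attack and of where it stalls.
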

 Finally, we note two corresponding results of   \cref{1.7} as follows.
 \begin{prop}\label{1.8}\textup{(cf. \cite[Theorems 2.2, 2.3]{BPS17})}
Let $X$ be a composite of  blow-ups of $\mathbb P^2$ at n distinct points. Then, $b(X)=1$ if and only if $d(X)=1$.
\end{prop}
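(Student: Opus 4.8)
The plan is to prove the two implications separately, the forward one being essentially immediate and the reverse one carrying all the content. Throughout I treat the nontrivial case $n\ge 1$ (for $n=0$ we have $X=\mathbb{P}^2$, which has no negative curve, so $b(X)=0$ while $d(X)=1$, and the statement is understood to exclude this degenerate case).

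For the implication $b(X)=1\Rightarrow d(X)=1$ I would argue as follows. Since $X$ is obtained from $\mathbb{P}^2$ by blowing up $n\ge 1$ points, the last exceptional divisor is a $(-1)$-curve, so a negative curve always exists and $b(X)\ge 1$. The hypothesis $b(X)=1$ therefore says exactly that every negative curve has self-intersection $-1$; consequently every curve $C$ on $X$ satisfies $C^2\ge -1$. This is precisely the hypothesis of \cref{1.3}, which yields $d(X)=1$ at once.

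For the reverse implication I would argue by contraposition, showing $b(X)\ge 2\Rightarrow d(X)\ne 1$. Assume there is a curve $C$ with $C^2=-k$ and $k\ge 2$; the goal is to exhibit an integral pseudo-effective divisor whose Zariski decomposition has a genuinely fractional negative part. The structural input is that for a blow-up of $\mathbb{P}^2$ at distinct points the N\'eron--Severi lattice $\NS(X)$ is unimodular, with orthogonal basis $H,E_1,\dots,E_n$ of self-intersections $1,-1,\dots,-1$. Because the pairing is perfect, the image of $D\mapsto (D\cdot C)$ equals $d_C\bZ$, where $d_C$ is the divisibility of the class of $C$; writing $C=d_CC'$ with $C'$ primitive gives $k=d_C^2\,|(C')^2|$, and a one-line check (comparing $k=d_C^2 m\ge d_C^2$ with $d_C$) shows $k\nmid d_C$ in every case. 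Hence some integral class meets $C$ in an integer not divisible by $k$; adding to it a large multiple, divisible by $k$, of a fixed ample class, I may take this class to be an \emph{ample} divisor $A$ with $(A\cdot C)\not\equiv 0\pmod{k}$. Now set $t=\lceil (A\cdot C)/k\rceil$ and $D=A+tC$. Then $D$ is big, hence pseudo-effective, and $(D\cdot C)$ lies strictly between $-k$ and $0$, so $a:=(D\cdot C)/C^2\in(0,1)$ is not an integer. Writing $P=D-aC$, the choice of $a$ gives $P\cdot C=0$, while for every other irreducible curve $C'$ one has $P\cdot C'=(A\cdot C')+(t-a)(C\cdot C')>0$, since $A$ is ample, $t-a>0$ and $C\cdot C'\ge 0$. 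Thus $P$ is nef, the $1\times 1$ matrix $(C^2)$ is negative definite, and by uniqueness $D=P+aC$ is the Zariski decomposition of $D$. As $a\notin\bZ$, the divisor $D$ admits no integral Zariski decomposition, so $d(X)\ne 1$.

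I expect the main obstacle to be the reverse direction, and within it the production of the ample divisor $A$ with $(A\cdot C)\not\equiv 0\pmod{k}$: this is exactly where unimodularity of $\NS(X)$ is essential, and it is also the step that would break on a surface whose N\'eron--Severi lattice is not unimodular. Once $A$ is fixed, the only remaining point needing care is that the negative part of $D$ is exactly $aC$, i.e.\ the nefness of $P$, which rests on the fact that no irreducible curve other than $C$ meets $D$ negatively.
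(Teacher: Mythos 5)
Your proof is correct, but there is nothing in the paper to compare it against: \cref{1.8} is stated as a quotation of \cite[Theorems 2.2, 2.3]{BPS17}, and this note gives no proof of it, so the only meaningful comparison is with the cited source. Your forward direction is the intended one: $b(X)=1$ forces $C^2\ge-1$ for every curve, and \cref{1.3} immediately gives $d(X)=1$. Your reverse direction checks out in detail: from $k=d_C^2\,|(C')^2|$ one indeed gets $k\nmid d_C$ for $k\ge2$, so an ample $A$ with $k\nmid(A\cdot C)$ exists; then $D=A+tC$ with $t=\lceil(A\cdot C)/k\rceil$ is big, satisfies $-k<D\cdot C<0$, and $P=D-aC$ with $a=(D\cdot C)/C^2\in(0,1)$ is nef because $A$ is ample, $t-a>0$, and $C\cdot C''\ge0$ for every irreducible curve $C''\ne C$; uniqueness of the Zariski decomposition then rules out an integral one. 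This rests on exactly the structural fact that makes blow-ups of $\mathbb{P}^2$ at distinct points special, namely unimodularity of $\NS(X)$, which is likewise the engine of the cited theorems of \cite{BPS17} and is what the paper alludes to in its introduction with the remark about surfaces with $|\Delta(X)|=1$. Two observations worth recording: first, your argument proves the stronger statement that \emph{any} surface with unimodular N\'eron--Severi lattice carrying a curve of self-intersection $\le-2$ has $d(X)\ne1$, which is precisely why the K3 example of \cite{HPT15} (whose lattice is not unimodular) does not contradict it; second, your construction is the divisor-level analogue of the paper's own \cref{2.3} --- there $d(X)=1$ forces $C^2\mid(C\cdot C_2)$ for a suitable curve $C_2$, while in your version $d(X)=1$ would force $C^2\mid(A\cdot C)$ for every ample $A$, and unimodularity manufactures an $A$ violating this.
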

Here, a smooth projective surface $X$ has $b(X)=1$ if every negative curve $C$ on $X$ is a (-1)-curve. By  \cref{1.8} and \cref{2.3}, we obtain the following result.
\begin{prop} \label{1.9}
Let $X$ be  a composite of blow-ups of $\mathbb P^2$ at  points $p_1,\cdots, p_n$ in very general position. If there is a negative curve $C$ and another curve $D$ on $X$ such that the intersection matrix of $C$ and $D$ is not negative definite and $C^2\nmid (C\cdot D)$, then  \cref{1.7} fails.
\end{prop}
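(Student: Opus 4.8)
The plan is to argue by contraposition, reading the two cited results as implications that run backwards from the divisibility failure to the existence of a negative curve that is not a $(-1)$-curve. Rather than assuming \cref{1.7} outright and hunting for a contradiction, I would chain the contrapositives of \cref{2.3} and \cref{1.8} and then observe that their combined conclusion is incompatible with \cref{1.7}.

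First I would invoke the contrapositive of \cref{2.3}. We are handed a negative curve $C$ and a further curve $D$ whose intersection matrix is \emph{not} negative definite and for which $C^2 \nmid (C\cdot D)$. Since the divisibility conclusion of \cref{2.3} (which holds whenever $d(X)=1$) is exactly the assertion $C^2 \mid (C\cdot D)$ for such a pair, the failure of this divisibility forces $d(X)\neq 1$. The non-negative-definiteness hypothesis is precisely what places the pair $(C,D)$ inside the scope of \cref{2.3}, so I would check carefully that this condition is being used in exactly the form \cref{2.3} requires, and in particular that \cref{2.3} is available for surfaces of arbitrary Picard number and not merely $\rho(X)=2$.

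Next I would apply the contrapositive of \cref{1.8}. That proposition gives the equivalence $b(X)=1 \iff d(X)=1$ for any composite of blow-ups of $\mathbb P^2$ at distinct points; a configuration of points in very general position is in particular a configuration of distinct points, so \cref{1.8} applies to our $X$. From $d(X)\neq 1$ I therefore obtain $b(X)\neq 1$, which by the definition of $b(X)=1$ means that $X$ carries a negative curve that is not a $(-1)$-curve. This negative curve directly contradicts the conclusion of \cref{1.7}, which predicts that every negative curve on a very general blow-up of $\mathbb P^2$ is a $(-1)$-rational curve; hence \cref{1.7} fails for this $X$, as claimed.

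I expect the only genuine subtlety to be bookkeeping rather than mathematics: one must confirm that the hypotheses of both cited results are met, namely that $X$ is indeed a blow-up of $\mathbb P^2$ at distinct points (so that \cref{1.8} applies) and that the pair $(C,D)$ satisfies the non-negative-definiteness required by \cref{2.3}. Once these are verified, both results are applied as black boxes and there is no computation to grind through; the entire content of the argument is the correct directional reading of the two contrapositives.
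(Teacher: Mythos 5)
Your proposal is correct and is essentially the paper's own argument: the paper proves \cref{1.9} simply by citing \cref{1.8} and \cref{2.3}, and your contrapositive chain (divisibility failure with $I(C,D)$ not negative definite $\Rightarrow d(X)\neq 1$ $\Rightarrow b(X)\neq 1$ $\Rightarrow$ some negative curve is not a $(-1)$-curve, contradicting \cref{1.7}) is exactly the intended combination, with \cref{2.3} indeed stated for arbitrary Picard number. The only hypothesis you leave implicit is $b(X)>0$, required by \cref{2.3}, which holds automatically since $C$ is a negative curve on $X$.
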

\section{The proof of  \cref{1.6}}
In this section, we divide our proof of   \cref{1.6} into some steps.
 \begin{notation}\cite[1.6]{Fuj79}
Let $C_1,\cdots, C_q$ be prime divisors. By $V(C_1,\cdots, C_q)$ we denote the $\bQ$-vector space of $\bQ$-divisors generated by $C_1,\cdots, C_q$. $I(C_1,\cdots, C_q)$ denotes the quadratic form on $V(C_1,\cdots, C_q)$ defined by the self-intersection number.
  \end{notation}
\begin{defn}(Fujita-Zariski~decomposition \cite{Zar62, Fuj79}) \label{Fuj-Zar}
Let $X$ be a smooth projective surface and $D$ a pseudo-effective divisor on $X$. Then $D$ can be written uniquely as a sum
 \begin{equation*}
                                                   D=P+N
 \end{equation*}
 of $\bQ$-divisors such that
 \begin{enumerate}
       \item[(1)] $P$ is nef;
        \item[(2)]  $N=\sum_{i=1}^q a_iC_i$ is effective with $I(C_1,\cdots, C_q)$ negative definite if $N\ne0$;
       \item[(3)]  $ P\cdot C_i=0$ for every component $C_i$ of $N$.
 \end{enumerate}
  In particular,  $X$ is said to satisfy $d(X)=1$ if every pseudo-effective divisor $D$ has an integral Zariski decomposition $D=P+N$, i.e. , $P$ and $N$ are integral divisors.
 \end{defn}
\begin{lemma}\label{2.3}
Let X be a smooth projective surface with $b(X)>0$ and $d(X)=1$. Suppose  $I(C_1,C_2)$ is not negative definite. Then, for every negative curve $C_1$ and every another curve $C_2$, $C_1^2|(C_1\cdot C_2)$.
\end{lemma}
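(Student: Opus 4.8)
The plan is to reduce the whole statement to a single application of the uniqueness in the Fujita--Zariski decomposition (\cref{Fuj-Zar}) together with the hypothesis $d(X)=1$. We may assume $C_1\neq C_2$, since otherwise $(C_1\cdot C_2)=C_1^2$ and the divisibility is trivial; as $C_1,C_2$ are then distinct reduced irreducible curves we have $\beta:=(C_1\cdot C_2)\ge 0$, while $C_1^2<0$ because $C_1$ is negative. Set $k:=-C_1^2>0$, so that the assertion becomes $k\mid\beta$.

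First I would isolate the candidate nef part, namely the orthogonal projection of $C_2$ onto $C_1^{\perp}$,
\[
P:=C_2-\frac{(C_1\cdot C_2)}{C_1^2}\,C_1=C_2+\frac{\beta}{k}\,C_1,
\]
which satisfies $P\cdot C_1=0$ by construction and
\[
P^2=C_2^2-\frac{(C_1\cdot C_2)^2}{C_1^2}=\frac{C_1^2C_2^2-(C_1\cdot C_2)^2}{C_1^2}.
\]
This is exactly where the hypothesis enters: since $C_1^2<0$ and $I(C_1,C_2)$ is \emph{not} negative definite, its determinant $C_1^2C_2^2-(C_1\cdot C_2)^2$ is $\le 0$, and dividing by $C_1^2<0$ yields $P^2\ge 0$. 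Had $I(C_1,C_2)$ been negative definite, we would instead get $P^2<0$, $P$ could not be nef, and the two curves would both sit in the negative part of the divisor below, where the relevant integrality condition is a different one; so excluding that case is essential.

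The heart of the argument is to upgrade ``$P^2\ge0$'' to ``$P$ is nef.'' In Picard number $2$ this is clean. The cone $\Eff(X)$ is two--dimensional, and I may take its extremal rays to be $C_1$ (a negative curve is always extremal) and a second ray $R$; since $\Nef(X)=\Eff(X)^{\vee}$, it suffices to check $P\cdot C_1\ge0$ and $P\cdot R\ge0$. The first is $0$. For the second, note $P\cdot C_2=P^2\ge0$, and writing $C_2=\alpha C_1+\gamma R$ with $\alpha,\gamma\ge0$ and $\gamma>0$ (as $C_1,C_2$ are linearly independent, being distinct irreducible curves) gives $P\cdot R=\gamma^{-1}\bigl(P\cdot C_2-\alpha\,P\cdot C_1\bigr)=\gamma^{-1}\,P\cdot C_2\ge0$. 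Hence $P$ is nef. I expect this nefness step to be the main obstacle: for higher Picard number the orthogonal projection of a curve onto $C_1^{\perp}$ need not be nef, and one would instead have to rule out any further negative curve entering the negative part of $D$ directly.

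Finally I would feed this into an integral Zariski decomposition. Fix an integer $m\ge \beta/k$ and set $D:=C_2+mC_1$, an effective (hence pseudo-effective) integral divisor. Then
\[
D=P+a\,C_1,\qquad a:=m-\frac{\beta}{k}=\frac{D\cdot C_1}{C_1^2}\ge0,
\]
where $P$ is nef, $P\cdot C_1=0$, and $I(C_1)$ is negative definite because $C_1^2<0$. By the uniqueness in \cref{Fuj-Zar} this is the Zariski decomposition of $D$, with negative part $a\,C_1$. Since $d(X)=1$, this negative part must be integral, forcing $a\in\bZ$; as $m\in\bZ$ this gives $\beta/k\in\bZ$, that is, $C_1^2\mid(C_1\cdot C_2)$, as claimed.
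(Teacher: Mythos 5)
Your argument has one genuine gap: the nefness step is carried out only under the assumption $\rho(X)=2$, but \cref{2.3} has no Picard number hypothesis. The lemma is stated for an arbitrary smooth projective surface with $b(X)>0$ and $d(X)=1$, and the paper genuinely uses it in that generality: it is invoked for composites of blow-ups of $\mathbb{P}^2$ at $n$ points in \cref{1.9}, and for elliptic surfaces with infinitely many sections in \cref{2.5}, both of which can have arbitrarily large Picard number. So, as written, your proof establishes only the special case needed for \cref{2.8}, not the lemma itself.

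The obstacle you anticipate for higher Picard number (``the orthogonal projection of a curve onto $C_1^{\perp}$ need not be nef'') does not in fact arise here, because your $P=C_2+\frac{\beta}{k}C_1$ is an \emph{effective} $\bQ$-divisor with $\Supp(P)\subseteq\{C_1,C_2\}$: any irreducible curve $C$ other than $C_1,C_2$ is not a component of $P$, so $P\cdot C\ge0$ automatically, and nefness only needs to be checked against $C_1$ and $C_2$ --- which you have already done ($P\cdot C_1=0$ by construction, and $P\cdot C_2=P^2\ge0$ by the failure of negative definiteness). Replacing your cone argument by this one-line observation proves the lemma on any surface. With that substitution, your proof is essentially the paper's: the paper takes $D=m_1C_1+m_2C_2$ with $\gcd(m_2,-C_1^2)=1$, exhibits exactly the same explicit decomposition, and deduces $-C_1^2\mid m_2(C_1\cdot C_2)$; your normalization $m_2=1$ is actually cleaner, since it makes the coprimality automatic (the paper also routes the sign condition through Fujita's Lemma 1.10, where you use the determinant criterion directly --- these are equivalent). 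One last small correction: distinct irreducible curves need not be linearly independent in $\NS(X)\otimes\bR$ (they can be numerically proportional), so your parenthetical justification is insufficient as stated; independence of $C_1,C_2$ instead follows because $C_2\equiv\lambda C_1$ with $\lambda>0$ would force $C_1\cdot C_2=\lambda C_1^2<0$, contradicting $C_1\cdot C_2\ge0$.
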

\begin{proof}
Let $D(m_1,m_2):=m_1C_1+m_2C_2$ with $m_1,m_2>0$. If $D(m_1,m_2)\cdot C_1<0$ and $D(m_1,m_2)\cdot C_2<0$, then by \cite[Lemma 1.10]{Fuj79}, $I(C_1,C_2)$ is negative definite.
Therefore, $D(m_1, m_2)\cdot C_1<0$ implies that $D(m_1,m_2)\cdot C_2\ge0$.

If $C_1\cdot C_2=0$, then $C_1^2|(C_1\cdot C_2)$, where $C_2^2\ge0$. Hence, we have completed the proof.

Now suppose $C_1\cdot C_2>0$.  Then, there are infinitely many coprime positive integer number pairs $(m_1,m_2)$  such that
 \begin{equation*}
 D(m_1,m_2)\cdot C_1<0, i.e., \frac{m_2}{m_1}< \frac{-C_1^2} {(C_1\cdot C_2)},
  \end{equation*}
 since there are infinitely many prime integers.
 Therefore, we have the following Zariski decomposition:
 \begin{equation*}
             D(m_1,m_2)=m_2(\frac{(C_1\cdot C_2)}{-C_1^2}C_1+C_2)+(m_1-m_2\frac{(C_1\cdot C_2)}{-C_1^2})C_1.
\end{equation*}
 Note that $-C_1^2$ has only finitely many prime divisors, there exists a positive integer $m_2$ such that $(m_2,-C_1^2)=1$. Since $d(X)=1$,  $D(m_1,m_2)$ has an integral Zariski decomposition. Hence,  $C_1^2|(C_1\cdot C_2)$.
 \end{proof}
 By  \cref{2.3}, we can answer the following question in some sense which was posed in \cite{B.etc.13}.
\begin{question}\cite[Question 4.5]{B.etc.13} \label{2.4}
Is there for each $g>1$ a surface $X$ with infinitely many $(-1)$-curves of genus $g$?
\end{question}
\begin{prop}\label{2.5}
 Let $f:X\longrightarrow B$ be a relatively minimal elliptic fibration of a smooth projective surface $X$ with the Kodaira dimension $\kappa(X)=2$  over a smooth base curve $B$ of genus $g\ge2$.  If  $d(X)=1$ and $X$ has infinitely many sections, then $X$ has infinitely many (-1)-curves of genus $g\ge2$ and $q(X)=p_g(X)$. Here, $q(X)$ is the irregularity of $X$, $p_g(X)$ is the geometric genus of $X$.
\end{prop}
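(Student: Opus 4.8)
The plan is to reduce both conclusions to the single numerical identity $\chi(\cO_X)=1$, and then to extract it from $d(X)=1$ via \cref{2.3}. First I would record the geometry of the sections. Every section $C$ is the image of $B$ under a closed immersion, so $C\cong B$ is smooth of genus $g\ge 2$ and satisfies $C\cdot F=1$, where $F$ is the class of a fibre. The existence of a section forces the fibration to have no multiple fibres: a multiple fibre $mF_{0}$ with $m\ge 2$ would give $1=C\cdot F=m(C\cdot F_{0})$ with $C\cdot F_{0}$ a positive integer, which is absurd. By the canonical bundle formula for a relatively minimal elliptic surface (cf. \cite{BHPV04}) we then have, numerically, $K_X\equiv\bigl(2g-2+\chi(\cO_X)\bigr)F$. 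Adjunction for $C$ gives $2g-2=C^{2}+K_X\cdot C=C^{2}+\bigl(2g-2+\chi(\cO_X)\bigr)$, hence
\[
C^{2}=-\chi(\cO_X)\qquad\text{for every section }C .
\]

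Because $\kappa(X)=2$, the surface is of general type and $\chi(\cO_X)=1-q(X)+p_g(X)\ge 1$; thus each of the infinitely many sections is a negative curve of self-intersection $-\chi(\cO_X)$, automatically of genus $g\ge 2$. A section is a $(-1)$-curve exactly when $\chi(\cO_X)=1$, and, since $\chi(\cO_X)=1-q(X)+p_g(X)$, this is equivalent to $q(X)=p_g(X)$. So it remains only to prove $\chi(\cO_X)=1$.

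Here I would bring in $d(X)=1$ through \cref{2.3}. Fix the zero section $O$ and abbreviate $\chi=\chi(\cO_X)$. For another section $C$ the intersection matrix
\[
I(O,C)=\begin{pmatrix}-\chi & O\cdot C\\ O\cdot C & -\chi\end{pmatrix}
\]
is negative definite if and only if $O\cdot C<\chi$; when $O\cdot C\ge\chi$, \cref{2.3} applies and yields $\chi=-O^{2}\mid(O\cdot C)$. To produce such sections I would use that the Mordell--Weil group of $X/B$ is finitely generated, so infinitely many sections force positive Mordell--Weil rank. Writing a section in its Shioda--Tate form $C\equiv O+\tfrac12 h(C)\,F+W$ with $W$ orthogonal to $O$ and $F$, one computes $O\cdot C=\tfrac12 h(C)-\chi$, and the height $h(C)=-W^{2}$ is unbounded along the (discrete) Mordell--Weil lattice. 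Hence $O\cdot C\ge\chi$ for all but finitely many sections, and for every such $C$ we obtain $\chi\mid(O\cdot C)$.

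The main obstacle is upgrading these divisibilities to the equality $\chi=1$. Since $\chi$ divides $O\cdot C$ for all but finitely many sections $C$, it would suffice to exhibit two such sections $C,C'$ with $\gcd(O\cdot C,\,O\cdot C')=1$, for then $\chi\mid 1$. Equivalently one may work with the intersection numbers $C\cdot C'=-\chi+\tfrac12 h(C)+\tfrac12 h(C')-\langle C,C'\rangle$ of pairs of high sections, which are also divisible by $\chi$ by \cref{2.3}. I would try to produce the required coprime pair by moving $C$ through the Mordell--Weil group --- using several independent directions, or suitable multiples and their differences of a fixed non-torsion section --- so as to make two of these intersection numbers coprime. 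Controlling the Mordell--Weil intersection data (through the Shioda height pairing and the bounded fibre contributions) finely enough to guarantee such a coprime pair, and thereby to exclude every $\chi\ge 2$, is the crux of the argument; once it is in place one gets $\chi(\cO_X)=1$, and the proposition follows, the infinitely many sections being $(-1)$-curves of genus $g\ge 2$ with $q(X)=p_g(X)$.
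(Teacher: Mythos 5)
Your setup agrees with the paper's proof: the existence of a section rules out multiple fibres, the canonical bundle formula gives $K_X\equiv\bigl(2g-2+\chi(\cO_X)\bigr)F$, and adjunction yields $C^2=-\chi(\cO_X)<0$ for every section $C$, so everything reduces to proving $\chi(\cO_X)=1$. But your derivation of $\chi(\cO_X)=1$ from $d(X)=1$ has a genuine gap, which you yourself flag as ``the crux'': you apply \cref{2.3} only to pairs of \emph{sections}, obtaining $\chi(\cO_X)\mid(O\cdot C)$ for every section $C$ with $O\cdot C\ge\chi(\cO_X)$, and you then hope to exhibit two such sections whose intersection numbers with $O$ are coprime. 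No such pair is produced, and the plan cannot be executed as a search: if $\chi(\cO_X)\ge2$, then \emph{all} the intersection numbers available to you are divisible by $\chi(\cO_X)$, so no coprime pair exists among them. What you would actually have to prove is that the divisibility constraints are incompatible with the structure of the Mordell--Weil lattice, and no such argument is given (also, your height formula $O\cdot C=\tfrac12 h(C)-\chi$ drops the local correction terms $\sum_v\mathrm{contr}_v(C)$, which further complicates any such attempt).

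The paper closes this gap with a one-line trick that your proposal misses: apply \cref{2.3} to the pair consisting of a section $C_1=C$ and a \emph{fibre} $C_2=F$, rather than two sections. The matrix
\[
I(C,F)=\begin{pmatrix}C^2&1\\1&0\end{pmatrix}
\]
has determinant $-1<0$, hence is not negative definite, so \cref{2.3} applies and gives $C^2\mid(C\cdot F)=1$. Thus $C^2=-1$, i.e.\ $\chi(\cO_X)=1$, i.e.\ $q(X)=p_g(X)$, and every section is a $(-1)$-curve of genus $g\ge2$. The whole point of taking the fibre as the second curve is that its intersection number with any section is $1$, so the divisibility conclusion is immediately decisive; none of the Mordell--Weil machinery (Shioda--Tate, unboundedness of the height) is needed.
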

\begin{proof}
Since  there exists a section $C$ on $X$,  $X$ has no multiple fibres.
In this case, by the well-known result of Kodaira (cf. \cite[Corollary V.12.3]{BHPV04}), $K_X$ is a sum of a specific choice of $2g(B)-2+\chi(\mathcal O_X)$ fibres of the elliptic fibration.
By \cite[Theorem X.4]{Bea96} and the adjunction formula, $-C^2=\chi(\mathcal O_X)>0$.
If $d(X)=1$, then applying  \cref{2.3} to $C_2=$ a fibre, we obtain $C^2=-1$ and $q(X)=p_g(X)$.
 \end{proof}
 \begin{prop}\label{2.6}
Every smooth projective surface with Picard number 2 satisfies  \cref{BNC}.
\end{prop}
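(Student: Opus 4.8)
The plan is to prove \cref{2.6} by showing that a surface $X$ with $\rho(X)=2$ carries only finitely many negative curves---in fact at most two---so that \cref{BNC} holds trivially with $b(X)=\max_i(-C_i^2)$ over this finite list (and $b(X)=0$ when there is no negative curve). Everything is driven by the Hodge index theorem: the intersection form on $\NS(X)_{\bbR}\cong\bbR^2$ has signature $(1,1)$, so the pseudo-effective cone $\Eff(X)$ is a salient cone in the plane, bounded by exactly two extremal rays.

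First I would record two elementary facts. Distinct irreducible curves satisfy $C\cdot C'\ge 0$, and two distinct negative curves can never be numerically proportional, since $C\equiv\lambda C'$ with $\lambda>0$ would force $C\cdot C'=\lambda (C')^2<0$. The heart of the argument is then that every negative curve spans an extremal ray of $\Eff(X)$. To make this concrete in rank $2$, fix an ample class $H$ and restrict the self-intersection form to the affine line $\ell=\{v:H\cdot v=1\}$. Since $H^{\perp}$ is negative definite in signature $(1,1)$, this restriction is a downward parabola, and the normalized classes of negative curves are precisely the effective classes lying outside the segment bounded by its two roots. Taking the vertex as basepoint, so that $v(t)=v_0+t\,d$ with $v_0\cdot d=0$ and $d^2<0$, one computes $v(s)\cdot v(t)=v_0^2+st\,d^2$, which is strictly negative whenever $s,t$ lie beyond the same root.

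Consequently, two distinct negative curves whose normalized classes sit on the same side of the positive cone would satisfy $C\cdot C'<0$, contradicting the first elementary fact; hence there is at most one negative curve on each side, i.e.\ at most two in total. Equivalently, three distinct negative curves would give three pairwise non-proportional extremal rays of the two-dimensional cone $\Eff(X)$, which is absurd. With finiteness established, \cref{BNC} is immediate.

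The main obstacle is this middle step---forcing a negative curve onto the boundary of $\Eff(X)$ and showing that two such curves on the same side must meet negatively. I expect the affine-slice computation above to be the cleanest route, with the concavity of $v\mapsto v^2$ on $\ell$ (equivalently the negative-definiteness of $H^{\perp}$) doing all the work. The only point to watch is the degenerate possibility that a boundary ray of $\Eff(X)$ is a null ray rather than a negative curve, but this is harmless, as it can only lower the count.
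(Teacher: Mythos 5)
Your proof is correct, and it reaches the same intermediate statement as the paper---at most two negative curves when $\rho(X)=2$---but by a genuinely more self-contained route. The paper's proof (\cref{2.7}) cites \cite[Lemma 1.22]{KM98} to place each negative curve on an extremal ray of $\NE(X)$, notes that a two-dimensional cone has only two extremal rays, and derives the contradiction $0\le C_i\cdot C_3=a_iC_i^2<0$ for a hypothetical third curve; as a by-product it records $\NE(X)=\bR_{\ge0}[C_1]+\bR_{\ge0}[C_2]$, which is reused later in \cref{2.11}. You instead prove the extremality-type statement by hand: fixing an ample class $H$, normalizing to the slice $H\cdot v=1$, and using negative definiteness of $H^{\perp}$ (Hodge index) to show that any two classes of negative square on the same side of the positive cone pair strictly negatively, which is incompatible with $C\cdot C'\ge0$ for distinct irreducible curves. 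Your computation $v(s)\cdot v(t)=v_0^2+st\,d^2<0$ when $s,t$ lie beyond the same root is exactly right (with $v_0=H/H^2$), and you correctly dispose of the proportional-classes case and of the possibility that a boundary ray is a null ray. What your approach buys is independence from the Mori-cone machinery---only the Hodge index theorem and nonnegativity of intersections of distinct curves are used---at the cost of not producing the explicit description of $\NE(X)$ that the paper's later claims rely on.
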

 Indeed,  \cref{2.6} follows from the following claim immediately.
 \begin{claim}\label{2.7}
  If $C_1, C_2$ are two negative curves on a smooth projective surface $X$ with $\rho(X)=2$, then
  \begin{equation*}
  \overline{NE}(X)=\bR_{\ge0}[C_1]+\bR_{\ge0}[C_2]
  \end{equation*}
and  $C_i ~(i=1,2)$  are the only two negative curves.
 \end{claim}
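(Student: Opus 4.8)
The plan is to work entirely inside the two-dimensional real Néron--Severi space $\NS(X)_{\bR}$, where by the Hodge index theorem the intersection form has signature $(1,1)$. First I would check that $[C_1]$ and $[C_2]$ form a basis. Since $C_1,C_2$ are distinct irreducible curves we have $C_1\cdot C_2\ge 0$; were $[C_1],[C_2]$ proportional, say $[C_1]=\lambda[C_2]$ with $\lambda>0$, then $C_1\cdot C_2=\lambda C_2^2<0$, a contradiction. Hence they are linearly independent, and as $\rho(X)=2$ they span $\NS(X)_{\bR}$. Writing the Gram matrix $\left(\begin{smallmatrix} C_1^2 & C_1\cdot C_2\\ C_1\cdot C_2 & C_2^2\end{smallmatrix}\right)$ of this basis, signature $(1,1)$ forces its determinant to be negative, so $(C_1\cdot C_2)^2>C_1^2C_2^2>0$. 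In particular $C_1\cdot C_2>0$, and setting $a=-C_1^2>0$, $c=-C_2^2>0$, $b=C_1\cdot C_2>0$ we obtain the crucial inequality $b^2>ac$.

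Next I would prove $\NE(X)=\sigma$, where $\sigma:=\bR_{\ge0}[C_1]+\bR_{\ge0}[C_2]$. The inclusion $\sigma\subseteq\NE(X)$ is clear since $C_1,C_2$ are effective, so it suffices to show every irreducible curve $D$ lies in $\sigma$. If $D\in\{C_1,C_2\}$ this is trivial, so assume $D$ is distinct from both; then $D\cdot C_1\ge0$ and $D\cdot C_2\ge0$. Writing $[D]=x[C_1]+y[C_2]$, these two inequalities read $by\ge ax$ and $bx\ge cy$, which give $x\ge (c/b)y\ge (ac/b^2)x$; since $ac/b^2<1$ this forces $x\ge0$, and then $by\ge ax\ge 0$ gives $y\ge0$. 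Thus $[D]\in\sigma$, so $\NE(X)\subseteq\sigma$ and equality holds.

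Finally, to see that $C_1,C_2$ are the only negative curves, I would revisit the same $D\ne C_1,C_2$ and show $D^2>0$. The degenerate cases are first excluded: $x=0$ would give $[D]=y[C_2]$ with $D\cdot C_2=yC_2^2<0$, contradicting $D\cdot C_2\ge0$, and symmetrically $y\ne0$; hence $x,y>0$. Putting $r=y/x>0$, the inequalities of the previous paragraph become $a/b\le r\le b/c$ (a nonempty interval precisely because $b^2>ac$), and $D^2=x^2\,q(r)$ with $q(r)=-cr^2+2br-a$. Since $q$ is concave with $q(a/b)=a(b^2-ac)/b^2>0$ and $q(b/c)=(b^2-ac)/c>0$, it stays positive on $[a/b,b/c]$, so $D^2>0$ and $D$ is not a negative curve. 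I expect the main obstacle to be purely this sign bookkeeping: everything hinges on having $b^2>ac$ from the Hodge index theorem, and on keeping track of which sub-wedge of $\sigma$ a hypothetical third negative curve could occupy. Once the two defining inequalities $D\cdot C_i\ge0$ are in hand, both $[D]\in\sigma$ and $D^2>0$ are forced, which simultaneously gives the cone equality and the uniqueness.
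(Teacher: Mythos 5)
Your proof is correct, but it takes a genuinely different route from the paper. The paper's proof is a two-line appeal to \cite[Lemma 1.22]{KM98}: negative curves span extremal rays of $\NE(X)$, a two-dimensional closed strongly convex cone has exactly two extremal rays, so the cone equality follows; and a hypothetical third negative curve $C_3$ would also be extremal, hence numerically proportional to some $C_i$, giving the contradiction $0\le C_3\cdot C_i=a_iC_i^2<0$. You instead work entirely with the Hodge index theorem: signature $(1,1)$ forces $b^2>ac$ for the Gram matrix of the basis $[C_1],[C_2]$, and then the two inequalities $D\cdot C_i\ge 0$ for any other irreducible curve $D$ pin its coordinates into the wedge and force $D^2=x^2q(r)>0$ by concavity of $q$. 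Your argument is self-contained (no cone machinery or extremality lemma needed) and proves slightly more than the statement: every curve other than $C_1,C_2$ has strictly \emph{positive} self-intersection, not merely non-negative; the price is the longer sign bookkeeping. Two small points you glossed over, neither a real gap: the scalar $\lambda$ in the proportionality step is positive because intersecting $[C_1]=\lambda[C_2]$ with an ample class gives $\lambda>0$ (the case $\lambda<0$ is not excluded by your displayed computation alone); and passing from ``every irreducible curve lies in $\sigma$'' to $\NE(X)\subseteq\sigma$ uses that the finitely generated cone $\sigma$ is closed, which is standard but worth a word.
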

 \begin{proof}
By \cite[Lemma 1.22]{KM98}, $C_1, C_2$ are both extremal curves in the closed Mori cone $\overline{NE}(X)$ which has only two extremal rays since $\rho(X)=2$. Thus, the first part of  \cref{2.7} follows.  Moreover, if $C_3$ is another negative curve (except for $C_1, C_2$), then the class $[C_3]$ is also extremal. Since $\rho(X)=2$,  $C_3\equiv a_iC_i$  for $i=1$ or 2 with $a_i\in\bQ_+$. Thus, $0\le C_i\cdot C_3=a_iC_i^2<0$, a contradiction.
 \end{proof}
 By \cref{2.3}, for the case when $\rho(X)=2$, we have the following result.
 \begin{claim}\label{2.8}
 Let $X$ be  a smooth projective surface with $\rho(X)=2$.
 If $b(X)>0$ and $d(X)=1$, then for every negative curve $C$ and every another curve $D$ on $X$, $C^2|(C\cdot D)$.
 \end{claim}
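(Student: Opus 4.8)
The plan is to deduce Claim~\ref{2.8} directly from Lemma~\ref{2.3}. That lemma already gives $C^2\mid(C\cdot D)$ for a negative curve $C$ and any other curve $D$, \emph{provided} the quadratic form $I(C,D)$ is not negative definite. So the only thing I would need to check is that, when $\rho(X)=2$, this non-degeneracy hypothesis holds automatically, after which the conclusion is immediate.

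The key observation is the Hodge index theorem: since $\rho(X)=2$, the intersection pairing on $\NS(X)_{\bR}\cong\bR^2$ has signature $(1,1)$, and in particular admits no negative definite subspace of dimension $2$. Concretely, for a negative curve $C$ and another curve $D$ I would consider the Gram matrix of their numerical classes,
\[
M=\begin{pmatrix} C^2 & C\cdot D\\ C\cdot D & D^2\end{pmatrix},
\]
and note that $\det M=C^2D^2-(C\cdot D)^2\le 0$. Indeed, if $[C]$ and $[D]$ are linearly independent they form a basis of $\NS(X)_{\bR}$, so $M$ represents a form of signature $(1,1)$ and $\det M<0$; if they are proportional then $M$ has rank at most one and $\det M=0$. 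Since $C^2<0$, the $2\times2$ form $I(C,D)$ could be negative definite only if in addition $\det M>0$, which never occurs; hence $I(C,D)$ is not negative definite.

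With the hypothesis of Lemma~\ref{2.3} verified, I would simply invoke it to conclude $C^2\mid(C\cdot D)$, which is exactly the claim. Two harmless edge cases remain: if $D\equiv C$ then $C^2\mid C^2$ trivially, and since distinct prime divisors meet non-negatively we always have $C\cdot D\ge 0$, so the case division ($C\cdot D=0$ or $C\cdot D>0$) used inside the proof of Lemma~\ref{2.3} applies verbatim. There is no serious obstacle here: the entire content is the reduction to Lemma~\ref{2.3}, and the one point worth stating carefully is that $\rho(X)=2$ together with the Hodge index theorem forces the negative-definiteness hypothesis to fail, so that the lemma is available for \emph{every} pair $(C,D)$.
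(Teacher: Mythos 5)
Your proof is correct and takes essentially the same route as the paper, which likewise obtains Claim~\ref{2.8} as an immediate corollary of Lemma~\ref{2.3}. The only difference is that you make explicit the step the paper leaves implicit: by the Hodge index theorem the intersection form on $\NS(X)_{\bR}$ has signature $(1,1)$ when $\rho(X)=2$, so the Gram matrix of $(C,D)$ has non-positive determinant and $I(C,D)$ can never be negative definite, which is precisely the justification needed to invoke Lemma~\ref{2.3} for every pair $(C,D)$.
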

It is well-known that the smooth projective surfaces satisfy the minimal model conjecture (cf. \cite{KM98, BCHM10}) as follows.
 \begin{lemma}\label{abund}
Let $X$ be a smooth projective surface. If the canonical divisor $K_X$ is pseudo-effective, then the Kodaira dimension $\kappa(X)\ge0$.
\end{lemma}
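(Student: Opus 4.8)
The plan is to prove the contrapositive: if $\kappa(X)=-\infty$, then $K_X$ is not pseudo-effective. First I would reduce to a relatively minimal surface. Since $X$ is a smooth projective surface, one may contract $(-1)$-curves successively (Castelnuovo, or the minimal model program of \cite{KM98, BCHM10}) to obtain a birational morphism $\pi\colon X\lra X_0$ onto a relatively minimal surface $X_0$. The Kodaira dimension is a birational invariant, so $\kappa(X)=\kappa(X_0)$. Moreover $K_X=\pi^*K_{X_0}+E$ with $E\ge0$ the $\pi$-exceptional divisor, so $K_X$ is pseudo-effective if and only if $K_{X_0}$ is: in one direction pull back $K_{X_0}$ and add the effective $E$, in the other push forward by $\pi_*$ (which sends pseudo-effective classes to pseudo-effective classes and kills $E$). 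Hence it suffices to treat the case $X=X_0$ relatively minimal.

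Next I would invoke the Enriques--Kodaira classification of relatively minimal surfaces: such a surface with $\kappa=-\infty$ is either $\mathbb{P}^2$ or a ruled surface $g\colon X_0\lra B$ over a smooth curve $B$, whose fibre $F$ is a smooth rational curve with $F^2=0$. I would then show that $K_{X_0}$ fails to be pseudo-effective by exhibiting a nef class meeting it negatively. On a surface the pseudo-effective cone coincides with the cone of curves $\NE(X_0)$ and is dual to the nef cone under the intersection pairing, so every pseudo-effective class meets every nef class non-negatively. For $\mathbb{P}^2$ take the hyperplane class $H$, which is nef with $K_{\mathbb{P}^2}\cdot H=-3H^2=-3<0$. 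In the ruled case take a fibre $F$, which is nef (indeed base-point-free); by the adjunction formula $K_{X_0}\cdot F=2g(F)-2-F^2=-2<0$. In either case $K_{X_0}$ meets a nef class negatively, so it is not pseudo-effective, and the contrapositive follows.

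Alternatively, one can argue directly rather than by contraposition: running the $K_X$-minimal model program, pseudo-effectivity of $K_X$ forbids termination with a Mori fibre space, so the program ends at a minimal model $X_{\min}$ with $K_{X_{\min}}$ nef; abundance for surfaces then gives that $K_{X_{\min}}$ is semiample, whence $\kappa(X)=\kappa(X_{\min})\ge0$. I prefer the contrapositive version because it makes the classification input explicit.

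The genuinely deep ingredient is not the formal reduction in the first step but the classification statement used in the second step (equivalently, abundance for surfaces), namely that $\kappa=-\infty$ forces a relatively minimal surface to be $\mathbb{P}^2$ or ruled. I would treat this as the main obstacle and simply cite the standard references \cite{BHPV04, Bea96} rather than reprove it; everything else is formal, turning only on the duality between the pseudo-effective and nef cones on a surface together with a one-line adjunction computation on the fibre.
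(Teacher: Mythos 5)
Your proof is correct, but note that the paper offers no proof of this lemma at all: it is stated as a well-known consequence of the minimal model program and abundance for surfaces, with a bare citation to \cite{KM98, BCHM10} --- which is exactly the alternative argument you sketch in your closing paragraph (pseudo-effectivity of $K_X$ forbids ending at a Mori fibre space, so the MMP terminates in a minimal model with nef canonical class, and abundance yields $\kappa\ge0$). Your primary argument is genuinely different and self-contained: you contrapose, reduce to a relatively minimal model $X_0$ via $K_X=\pi^*K_{X_0}+E$ with $E\ge0$ exceptional (pseudo-effectivity passing back and forth by pullback and pushforward, as you say), invoke the Enriques--Kodaira classification to get $X_0\simeq\mathbb{P}^2$ or a geometrically ruled surface, and then exhibit a nef class meeting $K_{X_0}$ negatively (the hyperplane class $H$ with $K_{\mathbb{P}^2}\cdot H=-3$, resp.\ a fibre $F$ with $K_{X_0}\cdot F=-2$ by adjunction), which is incompatible with pseudo-effectivity since nef classes pair non-negatively with the closure of the effective cone. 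Every step checks out. The trade-off: your route makes the surface-specific input explicit (Enriques' theorem that $\kappa=-\infty$ forces $\mathbb{P}^2$ or ruled, which you correctly flag as the one deep ingredient) and otherwise uses only cone duality and adjunction, all of it classical and citable to \cite{BHPV04, Bea96, Har77}; the paper's cited route is the one that generalizes beyond dimension two, but as used in the paper it functions as a black box rather than an argument.
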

 \begin{claim}\label{2.10}
Let $X$ be  a smooth projective surface with $\rho(X)=2$. If $\kappa(X)=-\infty, b(X)>0$ and $d(X)=1$, then X is a ruled surface with invariant $e=1$ or one point blow-up of $\mathbb P^2$.
\end{claim}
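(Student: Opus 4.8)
The plan is to combine the Enriques--Kodaira classification with the divisibility constraint of \cref{2.8}. First, since $\kappa(X)=-\infty$, the canonical divisor $K_X$ is not pseudo-effective (the contrapositive of \cref{abund}), so $X$ is birationally ruled. Because $\rho(X)=2$, the classification of surfaces forces $X$ to be a geometrically ruled surface $\pi\colon X\to B$ over a smooth curve $B$: indeed, if a minimal model of $X$ is $\mathbb{P}^2$, then $\rho(X)=2$ leaves only the one-point blow-up of $\mathbb{P}^2$ (that is, $\mathbb{F}_1$), while if a minimal model is geometrically ruled, then $\rho(X)=2$ forces $X$ to be that minimal surface itself. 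Thus I may write $X=\mathbb{P}(\mathcal{E})$ for a normalized rank-two bundle $\mathcal{E}$ on $B$, with fibre class $f$ satisfying $f^2=0$, and a section $C_0$ with $C_0^{2}=-e$, where $e$ is the invariant of $X$.

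Next I locate the negative curves. By the classical theory of geometrically ruled surfaces (see \cite{BHPV04}), every irreducible curve on $X$ is either the fibre $f$, the section $C_0$, or numerically equivalent to $aC_0+bf$ with $a\ge1$ and $b\ge ae$ when $e\ge0$ (respectively $b\ge\tfrac12 ae$ when $e<0$). A direct computation gives $(aC_0+bf)^2=a(2b-ae)\ge0$ for every curve in the last family, while $f^2=0$ and $C_0^{2}=-e$. Hence $X$ carries a negative curve if and only if $e>0$, in which case the unique negative curve is the section $C_0$. Since $b(X)>0$, I conclude that $e>0$; this is consistent with \cref{2.7}, which already forces at most one negative curve here, the remaining extremal ray $\mathbb{R}_{\ge0}[f]$ being non-negative.

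Finally I apply the divisibility of \cref{2.8} to the negative curve $C=C_0$ and the curve $D=f$. As $C_0$ is a section, $C_0\cdot f=1$, so the condition $C_0^{2}\mid(C_0\cdot f)$ reads $(-e)\mid 1$ and forces $e=1$. Therefore $X$ is a geometrically ruled surface of invariant $e=1$: if $B\cong\mathbb{P}^1$ this is $\mathbb{F}_1$, the one-point blow-up of $\mathbb{P}^2$, and if $g(B)\ge1$ it is a (minimal) ruled surface with invariant $e=1$, exactly as asserted.

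The step I expect to be the main obstacle is the middle one, namely verifying that $e\le0$ forces $b(X)=0$; equivalently, that every irreducible curve on a ruled surface of non-positive invariant has non-negative self-intersection. This rests on the precise numerical bounds $b\ge ae$ (for $e\ge0$) and $b\ge\tfrac12 ae$ (for $e<0$) for the classes $aC_0+bf$ carrying irreducible curves, which is exactly where the genus of $B$ enters; once this is in hand, the remainder is a one-line numerical computation together with the appeal to \cref{2.8}.
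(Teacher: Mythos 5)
Your proof is correct and follows essentially the same route as the paper: reduce via the classification of surfaces with $\kappa(X)=-\infty$ and $\rho(X)=2$ to a geometrically ruled surface or the one-point blow-up of $\mathbb{P}^2$, identify $C_0$ as the unique negative curve (forcing $e>0$), and then apply the divisibility of \cref{2.8} to $C_0$ and a fibre $f$ to get $(-e)\mid 1$, hence $e=1$. The only cosmetic difference is that you re-derive the fact that $e>0$ is equivalent to the existence of a negative curve from the numerical bounds on irreducible classes $aC_0+bf$, whereas the paper simply cites \cite[Proposition V.2.20]{Har77} for this.
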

\begin{proof}
 Let $S$ be a relatively minimal model of $X$.
 A smooth projective surface $S$ is relatively minimal if it has no  (-1)-rational curves.
 By the classification of relatively minimal surfaces (cf. \cite{Har77, BHPV04,KM98}), it must be one of the following cases: a surface with nef canonical divisor, a ruled surface or $\mathbb P^2$.
  Since $\kappa(X)=-\infty$, by  \cref{abund}, $K_S$ is not nef.
  Therefore, $S$ is either a ruled surface or $\mathbb P^2$.
 As a result, $\rho(X)=2$ implies that $X$ is either a ruled surface or one point blow-up of $\mathbb P^2$.

Now suppose $X$ is ruled.  Let $\pi: X\longrightarrow C$ be a ruled surface over a curve $C$ with invariant $e$, let $C_0\subseteq X$ be a suitable section, and let $f$ be a fibre.  Then, we have the following ( cf. \cite[Propositions V.2.3 and V.2.9]{Har77}):
 \begin{equation*}
                  \mathrm{Pic}\ X\simeq\bZ C_0\oplus\pi^{*}\mathrm{Pic}\ C, C_0\cdot f=1, f^2=0, C_0^2=-e.
\end{equation*}
Let $D=aC_0+bf$ be a curve on  $X$. By \cite[Proposition V.2.20]{Har77}, $D^2<0$ if and only if $D=C_0$ and $e>0$. Since $d(X)=1$, applying  \cref{2.8} to a fibre $f$, we obtain $e=1$.
 \end{proof}
 \begin{claim} \label{2.11}
 Let $X$ be a smooth projective surface with $\rho(X)=2$.
 If $X$ has two negative curves, then $X$ must be one of the following types: K3 surface, surface of general type, or one point blow-up of either an abelian surface or a K3 surface with Picard number 1.
 \end{claim}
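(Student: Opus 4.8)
The plan is to run through the Enriques--Kodaira classification according to the value of $\kappa(X)$ and to verify that two negative curves on a surface with $\rho(X)=2$ force $X$ into one of the listed types. I first handle the two extreme cases. If $\kappa(X)=-\infty$, then by the classification of surfaces $X$ is a geometrically ruled surface or the one-point blow-up of $\mathbb P^2$; since $\rho(X)=2$ it is in fact a $\mathbb P^1$-bundle over a curve, and by \cite[Proposition V.2.20]{Har77} (compare the proof of \cref{2.10}) such a surface carries at most one negative curve, namely the negative section. This contradicts the hypothesis of two negative curves, so $\kappa(X)\ge0$. At the opposite end, $\kappa(X)=2$ requires nothing further, since a surface of general type already appears among the permitted types.

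The heart of the matter is to exclude $\kappa(X)=1$. In that case $X$ admits its Iitaka elliptic fibration $f\colon X\to B$ over a curve, whose general fibre gives a nonzero nef class $F$ with $F^2=0$. By \cref{2.7} the two negative curves $C_1,C_2$ generate the two extremal rays of $\NE(X)$, hence a $\bQ$-basis of $\Num(X)_{\bQ}$, so the nefness of $F$ reads $F\cdot C_1\ge0$ and $F\cdot C_2\ge0$. If both of these were strictly positive, $F$ would be positive on every nonzero class of $\NE(X)$ and thus ample by Kleiman's criterion, forcing $F^2>0$ --- impossible. Hence $F\cdot C_i=0$ for some $i$; but the intersection form has signature $(1,1)$ by the Hodge index theorem, so the orthogonal complement of the isotropic class $F$ is the line $\bQ F$ itself, whence $C_i\in\bQ F$ and $C_i^2=0$, contradicting $C_i^2<0$. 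I expect this isotropic-versus-negative dichotomy to be the main obstacle, both in keeping the case split clean and in reconciling it with the K3 example of \cref{2.12}: there the N\'eron--Severi lattice admits no isotropic class, so that surface has no elliptic fibration and the obstruction never arises.

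It remains to treat $\kappa(X)=0$, where by the classification of minimal surfaces \cite{BHPV04} the minimal model $X_{\min}$ of $X$ is a K3, abelian, Enriques, or bielliptic surface. I would first observe that abelian and bielliptic surfaces contain no negative curve at all: their canonical class is numerically trivial, so adjunction gives $C^2=2g(C)-2$ for every curve $C$, and neither surface contains a rational curve --- directly for an abelian surface, and for a bielliptic surface because its finite \'etale cover by an abelian surface would pull any rational curve back to a rational curve upstairs. Hence $g(C)\ge1$ and $C^2\ge0$ for every curve on such a surface. Writing $\rho(X)=\rho(X_{\min})+k$ with $k$ the number of blow-ups reaching $X_{\min}$, the equation $\rho(X)=2$ forces $(k,\rho(X_{\min}))\in\{(0,2),(1,1)\}$. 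In the first case $X=X_{\min}$ is minimal with $\rho=2$: the Enriques surface is ruled out by $\rho=10$, and the abelian and bielliptic surfaces are ruled out because they have no negative curve, so $X$ is a K3 surface. In the second case $X$ is the one-point blow-up of a minimal surface of Picard number $1$, which can only be a K3 or an abelian surface, the Enriques and bielliptic surfaces having Picard numbers $10$ and $2$. Together with the general-type case, this produces exactly the four asserted types and completes the plan.
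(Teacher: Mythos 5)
Your proof is correct, but it follows a genuinely different route from the paper's. The paper never splits on the Kodaira dimension beyond excluding $\kappa(X)=-\infty$: instead it uses $\kappa(X)\ge 0$ to write $K_X\equiv a_1C_1+a_2C_2$ with $a_1,a_2\ge 0$ via \cref{2.7}, and then runs a trichotomy on the coefficients --- $a_1,a_2>0$ gives $K_X$ big (general type); $a_1=a_2=0$ gives $K_X\equiv 0$, hence a minimal surface which the genus formula and $\rho=2$ force to be K3; and in the mixed case $a_1>0,a_2=0$ a decomposition argument on an effective representative of $K_X$ shows $K_X\sim_{\bQ}a_1C_1$ with $C_1$ a $(-1)$-curve, after which Castelnuovo contraction yields the one-point blow-up of a K3 or abelian surface of Picard number $1$. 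In that scheme the case $\kappa(X)=1$ is never addressed head-on; it is excluded only a posteriori, because the trichotomy produces $\kappa\in\{0,2\}$. You instead stratify by $\kappa$ directly: your exclusion of $\kappa(X)=1$ --- the elliptic fibre class is a nonzero nef isotropic class, Kleiman's criterion forces it to be orthogonal to one of the cone generators $C_i$, and the Hodge index theorem then puts $C_i$ on the line $\bQ F$, contradicting $C_i^2<0$ --- is an independent lattice-theoretic argument that does not appear in the paper (it is close in spirit to, but different from, the paper's \cref{2.14}, which uses Zariski's lemma on reducible fibres); and your $\kappa(X)=0$ case replaces the paper's divisor manipulation and Castelnuovo contraction by minimal-model-plus-Picard-number counting, $(k,\rho(X_{\min}))\in\{(0,2),(1,1)\}$. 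Your approach makes the vanishing of the $\kappa=1$ case explicit and structural; the paper's buys a sharper by-product, namely that in the blow-up case one actually has $K_X\sim_{\bQ}a_1C_1$ supported on the $(-1)$-curve. Two small points of hygiene: for non-minimal $X$ with $\kappa(X)=1$ the elliptic fibration should be taken as the composite $X\to X_{\min}\to B$ (the Iitaka fibration of $X$ itself is a priori only a rational map), and your side remark about \cref{2.12} is only relevant in its two-negative-curve case $a=-2$, where indeed $b>2$ forces $b^2-4$ to be a non-square so no isotropic class exists; neither affects the correctness of the argument.
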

 \begin{proof}
Suppose $X$ has two negative curves $C_1,C_2$.
By  \cref{2.10}, if $\kappa(X)=-\infty$, then $X$ has at most one negative curve.
Thus, $\kappa(X)\ge0$, i.e., there exists a positive integral number $m$ such that $h^0(X, \mathcal O_X(mK_X))\ge0$.
 Therefore, $K_X$ is a $\mathbb Q$-effective divisor.  As a result, by \cref{2.7}, we have the following result:
 \begin{equation*}
K_X\in\overline{NE}(X)=\bR_{\ge0}[C_1]+\bR_{\ge0}[C_2], ~i.e., K_X\equiv a_1C_1+a_2C_2, a_1,a_2\ge0.
 \end{equation*}
Hence, we have three cases as follows.
\begin{enumerate}
\item[(1)] $a_1,a_2>0$. Then $K_X$ is an  interior point of $\overline{NE}(X)$, and by \cite[Lemma 10.5]{Iit82} or  \cite[Theorem 2.2.26]{Laz04}, $K_X$ is big. Thus, $X$ is a surface of general type.
\item[(2)]  $a_1=a_2=0$. Then $K_X\equiv0$, i.e., $X$ is minimal. By Enriques Kodaira classification (cf.  \cite[Theorem V.6.3]{Har77}), $X$ has the following cases: K3 surface, Enriques surface, abelian surface, hyperelliptic surface, where in the latter two cases, $X$ has no any rational curves.
    By the genus formula, every negative curve $C$ on $X$ is a (-2)-rational curve.
    As a result, $X$ is either a K3 surface or an Enrique surface. Moreover, since an Enriques surface $X$ has $\rho(X)=10$ by \cite[Proposition VIII.15.2]{BHPV04}, $X$ is a K3 surface.
\item[(3)]  $a_1>0, a_2=0$.  Then $K_X\equiv a_1C_1$.  Since $K_X$ is a $\mathbb Q$-effective divisor, there exists an effective divisor $D$ such that $K_X\sim_{\bQ} D$. Therefore, we can find an effective divisor $D'\neq C_1$ such that
\begin{equation*}
                   a_1'C_1+D'=D\equiv a_1C_1,
\end{equation*}
where $a_1'\ge0$, and $D'$ and $C_1$ have no common components.  Then, $D'\equiv (a_1-a_1')C_1$.

If $a_1=a_1'$,then $K_X\sim_{\bQ} a_1C_1$ with $a_1>0$. By the genus formula, $C_1$ is a (-1)-rational curve.   In this case, $\kappa(X)=\kappa(X,C)=0$. By Castelnuovo's contractibility criterion (cf. \cite[Theorem V.5.7]{Har77} or \cite[Thereom II.17]{Bea96}), $X$ is a one point  blow-up  of either an abelian surface or a  K3 surface with Picard number 1.

If $a_1>a_1'$, then $D'\cdot C_1=(a_1-a_1')C_1^2<0$, a contradiction.

If $a_1<a_1'$, on the one hand $D'+(a_1'-a_1)C_1\equiv0$ with $a_1'-a_1\ge0$; on the other hand, there is an ample divisor $H$ on $X$ such that $(D'+(a_1'-a_1)C_1)\cdot H=0$. Since the restriction of an ample divisor to a curve is still ample,  $D'+(a_1'-a_1)C_1=0$, i.e., $D=a_1C_1,a_1=a_1'$, a contradiction.
\end{enumerate}
  \end{proof}
Theorem A of \cite{HPT15} is a special case of the following  \cref{2.12}.
 \begin{claim} \label{2.12}
Let $X$ be  a smooth projective surface with $\rho(X)=2$. If $\kappa(X)=0,b(X)>0, d(X)=1$ and $K_X$ is nef, then  X is a K3 surface admitting the intersection form on the $N\acute{e}ron$-Severi lattice of X, which is
  \[  \begin{pmatrix} a&b\\b&-2\end{pmatrix}\]
where $a\in\big\{0,-2\big\}$ and $b+a\in 2\bZ_{>0}$.
\end{claim}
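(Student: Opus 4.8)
The plan is to first pin down the surface type and then read off the lattice. Since $K_X$ is nef, $X$ carries no $(-1)$-curve and is therefore minimal; as $\kappa(X)=0$ this forces $K_X\equiv 0$. By the Enriques--Kodaira classification the minimal surfaces with $K_X\equiv0$ are K3, Enriques, abelian and bielliptic surfaces. The hypothesis $\rho(X)=2$ rules out Enriques surfaces (for which $\rho=10$), while $b(X)>0$ rules out abelian and bielliptic surfaces, since on these every effective divisor is nef and hence no negative curve exists. Thus $X$ is a K3 surface, and by adjunction (using $K_X=0$) every negative curve $C$ satisfies $C^2=2g(C)-2=-2$, i.e. is a smooth rational $(-2)$-curve. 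This argument parallels the case $a_1=a_2=0$ of \cref{2.11}, but I would run it directly so as not to presuppose two negative curves.

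Next I would analyse the Mori cone. By the Hodge index theorem $\NS(X)$ has signature $(1,1)$, so $\NE(X)$ has exactly two extremal rays; by $b(X)>0$ and \cref{2.7} one of them is spanned by a $(-2)$-curve $C_2$. Let $C_1$ span the other ray. An extremal ray of $\NE(X)$ cannot lie in the open positive cone, so $C_1^2\le0$, and since the K3 lattice is even $C_1^2$ is an even integer. I claim only $C_1^2\in\{0,-2\}$ occur: if the ray has negative square it is again spanned by an irreducible negative curve (\cref{2.7}), necessarily a $(-2)$-curve on a K3, giving $a:=C_1^2=-2$; otherwise the ray lies on the boundary of the positive cone, its primitive generator $F$ is nef with $F^2=0$, and Riemann--Roch on the K3 shows $F$ is effective and defines an elliptic pencil, giving $a=0$.

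With $C_1,C_2$ chosen as above, the Gram matrix in this basis is $\begin{pmatrix} a & b\\ b& -2\end{pmatrix}$ with $a\in\{0,-2\}$ and $b:=C_1\cdot C_2$. Both $C_1$ and $C_2$ are effective, so $b\ge0$; \cref{2.8} applied to the negative curve $C_2$ gives $C_2^2\mid(C_1\cdot C_2)$, i.e. $b$ is even; and the Hodge index theorem forces the determinant $-2a-b^2$ to be negative. When $a=-2$ this yields $b^2>4$, hence $b\ge4$ and $b+a=b-2\in2\bZ_{>0}$; when $a=0$ it yields $b\ne0$, hence $b\ge2$ and $b+a=b\in2\bZ_{>0}$. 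This reproduces exactly the asserted list of forms.

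The main obstacle is the case $a=0$, where two points must be secured. First, one must exclude the possibility that the second extremal ray is irrational --- that is, that $\NS(X)$ contains no integral isotropic class and $X$ admits no elliptic fibration --- so that $C_1=F$ genuinely lies in $\NS(X)$; this is where the full strength of $d(X)=1$, and not merely the divisibility of \cref{2.8}, has to be brought to bear, in tandem with the description of $\NE(X)$ for a K3 and with Riemann--Roch. Second, one must check that the chosen extremal classes $C_1,C_2$ actually form a $\bZ$-basis of $\NS(X)$ (equivalently, that the sublattice they generate has index one), so that the computed Gram matrix is the intersection form on the whole N\'eron--Severi lattice rather than only on a finite-index sublattice. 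Controlling this index is the delicate lattice-theoretic step, and I expect it, rather than the cone analysis, to require the most care.
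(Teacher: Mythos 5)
Your outline reaches the right statement and the routine parts (reduction to K3, the divisibility $2\mid b$ via \cref{2.8}, the Hodge-index computation ruling out small $b$) match the paper's computations. But the two ``obstacles'' you flag at the end and leave open are not side issues to be deferred --- they are exactly the mathematical content of the claim, and without them your argument is a genuine gap, not a proof. The paper closes both at once by citing Kov\'acs' structure theorem for the cone of curves of a K3 surface \cite{Kov94}: for $\rho(X)=2$, if $X$ carries at least one negative curve, then $\overline{NE}(X)=\bR_{\ge0}[C_1]+\bR_{\ge0}[C_2]$ with either $C_1^2=C_2^2=-2$ or $C_1^2=0$, $C_2^2=-2$ (so both rays are rational and of the asserted squares), and then invokes \cite[Corollary 1.4]{Kov94} to get the intersection form on all of $\NS(X)$, which is precisely your index-one/$\bZ$-basis problem. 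Your proposal contains no argument for either point, only the statement that they ``require the most care.''

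Moreover, your guess about \emph{how} to close the first gap points in the wrong direction: you say excluding an irrational isotropic extremal ray ``is where the full strength of $d(X)=1$ has to be brought to bear.'' In fact $d(X)=1$ is irrelevant to that step. The exclusion is a purely lattice-theoretic fact about K3 surfaces, encapsulated in Kov\'acs' theorem: if $\NS(X)$ has a $(-2)$-class but no rational isotropic ray, the reflection group generated by $(-2)$-classes is infinite, and the walls of its fundamental domain (the nef cone) are perpendicular to irreducible $(-2)$-curves, so the Mori cone is then spanned by \emph{two} $(-2)$-curves --- the irrational-ray scenario with a single negative curve simply never occurs on a K3, with or without $d(X)=1$. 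In the paper, $d(X)=1$ enters only once, through \cref{2.8}, to force $C_i^2\mid(C_1\cdot C_2)$, i.e.\ $2\mid b$. So to repair your proof you would either have to reprove Kov\'acs' $\rho=2$ theorem and its lattice corollary (a nontrivial Weyl-group/Riemann--Roch argument), or cite it as the paper does.
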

\begin{proof}
Since $\kappa(X)=0$ and $K_X$ is nef,  $K_X\equiv 0$. By the genus formula, every negative curve on $X$ is a (-2)-rational curve. Note that abelian surfaces and hyperelliptic surfaces have no rational curves. Then by \cite[Theorem V.6.3]{Har77} and \cite[Proposition VIII.15.2]{BHPV04},  we know that $X$ is a K3 surface.  In \cite{Kov94}, the author showed that $\overline{NE}(X)=\mathbb R_{\ge0}[C_1]+\mathbb R_{\ge0}[C_2]$, where either $C_1^2=C_2^2=-2$ or $C_1^2=0$ and $C_2^2=-2$. Since $d(X)=1$, applying  \cref{2.8} to a negative curve $C_i$, $C_i^2|(C_1\cdot C_2)$. By the Hodge index theorem, $(C_1\cdot C_2)^2-C_1^2\cdot C_2^2>0$. Finally, the desired result holds by using \cite[Corollary 1.4]{Kov94}.
 \end{proof}
The following lemma is well known.
 \begin{lemma}\cite[Proposition III.11.4]{BHPV04} \label{2.13}
Let $p: X\longrightarrow B$ be an elliptic fibration from a smooth projective surface $X$ to  a curve $B$. If every fibre is of type $mI_0$, then $c_2(X)=0$.
\end{lemma}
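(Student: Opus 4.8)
The plan is to reduce the assertion to a computation of the topological Euler characteristic, since for the smooth compact complex surface $X$ the top Chern number satisfies $c_2(X)=e(X)$ by the Gauss--Bonnet theorem; I would then show $e(X)=0$ by exploiting how the Euler number behaves under the fibration $p\colon X\longrightarrow B$. Concretely, I would invoke the standard additivity formula for the Euler number of a fibration over a curve: if $F$ denotes a general fibre, then
\begin{equation*}
e(X)=e(F)\,e(B)+\sum_{b\in B}\bigl(e(p^{-1}(b))-e(F)\bigr),
\end{equation*}
where only the finitely many $b$ carrying a non-general fibre contribute. Because $p$ is an elliptic fibration, a general fibre $F$ is a smooth genus-one curve, topologically a real two-torus, so $e(F)=0$, and the formula collapses to $e(X)=\sum_{b\in B}e(p^{-1}(b))$.

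Next I would compute the Euler number of a fibre of type $mI_0$. By Kodaira's classification such a fibre has reduced support a smooth elliptic curve, and passing to multiplicity $m$ does not alter the underlying reduced analytic set; since the topological Euler characteristic depends only on that underlying space, $e(p^{-1}(b))$ equals the Euler number of a smooth elliptic curve, namely $0$. As every fibre of $p$ is assumed to be of type $mI_0$, every summand vanishes, and hence $c_2(X)=e(X)=0$.

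The main obstacle is the single nontrivial input, the vanishing $e(mI_0)=0$: one must justify that replacing a smooth elliptic fibre by a multiple fibre with the same smooth reduction leaves the Euler number unchanged. This is precisely where Kodaira's local analysis of singular fibres is needed --- the Euler number of each fibre is read off from his table and depends only on its reduced topological type --- and it is the point at which one must check carefully that the multiplicity contributes nothing.
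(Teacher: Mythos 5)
Your proof is correct and takes essentially the same route as the paper: the paper offers no argument of its own but simply cites \cite[Proposition III.11.4]{BHPV04}, and that proposition is exactly the fibration Euler-number formula $e(X)=e(F)\,e(B)+\sum_{b}\bigl(e(p^{-1}(b))-e(F)\bigr)$ you invoke, combined with $c_2(X)=e(X)$ and the observation that a fibre of type $mI_0$ has a smooth elliptic curve as its underlying space. The one point you flag as delicate---that the multiplicity $m$ contributes nothing---is in fact immediate, since the topological Euler characteristic depends only on the reduced fibre $p^{-1}(b)$, which for type $mI_0$ is a real two-torus with Euler number zero.
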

 \begin{claim}\label{2.14}
 Let $X$ be  a smooth projective surface with $\rho(X)=2$. If $\kappa(X)=1$ and $b(X)>0$, then $X$ has exactly one negative curve $C$ and every singular fibre is irreducible. In particular, if every fibre is of type $mI_0$, then $g(C)\ge2$.
 \end{claim}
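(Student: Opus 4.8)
The plan is to put everything on the elliptic structure of $X$ and read off the conclusions from the fibre class together with the adjunction formula. Since $\kappa(X)=1$, by \cref{abund} $K_X$ is pseudo-effective, and the Iitaka fibration of $X$ is an elliptic fibration $p\colon X\to B$ onto a smooth curve $B$, with general fibre class $f$ satisfying $f^2=0$ and $f$ nef. I would first check that $X$ is already relatively minimal: any elliptic surface has Picard number at least $2$ (the classes of $f$ and of an ample divisor are independent, since $f^2=0$), so the minimal model $X_{\min}$ of $X$ has $\rho(X_{\min})\ge 2$; as $\rho(X)=2$ this forces $X=X_{\min}$. Being minimal with $\kappa(X)=1$, the canonical bundle formula gives $K_X\equiv\lambda f$ for some $\lambda\in\bQ_{>0}$, so $K_X$ is nef and $K_X^2=0$.

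Next I would prove that there is exactly one negative curve. Since $b(X)>0$ there is at least one, so suppose there were two, say $C_1,C_2$; by \cref{2.7} they generate $\NE(X)=\bR_{\ge0}[C_1]+\bR_{\ge0}[C_2]$. As $f$ is effective I can write $f\equiv\alpha C_1+\beta C_2$ with $\alpha,\beta\ge0$, and then $0=f^2=\alpha\,(f\cdot C_1)+\beta\,(f\cdot C_2)$ with both summands nonnegative because $f$ is nef; hence $\alpha\,(f\cdot C_1)=\beta\,(f\cdot C_2)=0$. If exactly one coefficient is positive then $f$ is proportional to some $C_i$, forcing $f^2=\alpha^2C_i^2<0$, a contradiction. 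If both are positive then $f\cdot C_1=f\cdot C_2=0$, so both $C_i$ are vertical; but then every class of $\NE(X)$ meets $f$ trivially, contradicting the existence of a horizontal component of an ample divisor, which meets $f$ positively. Thus $X$ carries a unique negative curve $C$.

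I would then deduce that every singular fibre is irreducible. If some fibre $F$ were reducible, Zariski's lemma says that the intersection matrix of its components is negative semidefinite with radical spanned by $F$, so each of its (at least two) distinct components is a curve of negative self-intersection; this produces at least two negative curves, contradicting the previous step. Hence every singular fibre is irreducible.

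Finally, for the case in which every fibre has type $mI_0$ I would combine numerology with Riemann--Hurwitz. Such fibres are irreducible, so $C$ cannot be vertical (a reduced irreducible fibre has self-intersection $0$); thus $C$ is horizontal and $d:=(C\cdot f)\ge1$. By \cref{2.13} we have $c_2(X)=0$, and with $K_X^2=0$ Noether's formula yields $\chi(\cO_X)=0$; feeding this into the canonical bundle formula expresses $\lambda$ through $g(B)$ and the multiplicities of the multiple fibres. I would then apply the adjunction formula $2g(C)-2=C^2+K_X\cdot C$ together with the Riemann--Hurwitz formula for $p|_C\colon C\to B$, using that $C$ is forced to ramify over each multiple fibre, to bound $g(C)$ from below. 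The main obstacle is exactly this last step: controlling the interaction between the multiple fibres and the negativity $C^2<0$, i.e.\ showing that the base orbifold is hyperbolic enough to force $2g(C)-2\ge2$. Once $g(B)\ge2$ is in hand the estimate $2g(C)-2\ge d\,(2g(B)-2)\ge2$ is immediate, so the delicate point is ruling out low base genus in the presence of multiple fibres.
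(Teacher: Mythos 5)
Your reduction to a minimal elliptic surface with $K_X\equiv\lambda f$ nef, your proof that there is exactly one negative curve, and your proof that every fibre is irreducible are all correct, and the uniqueness step is a genuinely different (and more self-contained) route than the paper's: the paper excludes a second negative curve by appealing to the classification of \cref{2.11} (two negative curves with $\rho(X)=2$ force $X$ to be K3, of general type, or a one-point blow-up of an abelian or K3 surface, none of which has $\kappa=1$; the citation of \cref{2.10} in the paper's proof is evidently a slip for \cref{2.11}), whereas you get a contradiction directly from \cref{2.7} by intersecting $f\equiv\alpha C_1+\beta C_2$ against the nef class $f$ and an ample class. Only a trivial case is missing there: $\alpha=\beta=0$ is impossible since $f\cdot H>0$ for $H$ ample.

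The genuine gap is the final statement, and you flag it yourself: nothing in the proposal actually proves $g(C)\ge2$ when all fibres are of type $mI_0$. The paper closes this step externally: \cref{2.13} gives $c_2(X)=0$, and then \cite[Theorem 2.4]{B.etc.13} yields $0<-C^2\le 2g(C)-2$, whence $g(C)\ge2$. What you missed is that your own Riemann--Hurwitz plan already suffices, and the point you call ``delicate'' (ruling out low base genus) never needs to be addressed. Since $X$ is minimal elliptic with $\chi(\cO_X)=0$, the canonical bundle formula gives $K_X\equiv\lambda f$ with $\lambda=2g(B)-2+\sum_i(1-1/m_i)$, and $\kappa(X)=1$ forces $\lambda>0$ \emph{regardless of} $g(B)$. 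The curve $C$ is horizontal (a vertical curve would be the reduction of an irreducible fibre, hence of square $0$), so the normalization $\wtilde{C}\to B$ has degree $d=C\cdot f\ge1$. Over the point beneath a multiple fibre $m_iF_i$, every ramification index is a positive multiple of $m_i$ (a local parameter on $B$ pulls back to a function vanishing to order $m_i$ along $F_i$), so there are at most $d/m_i$ preimages and the ramification over that point contributes at least $d(1-1/m_i)$. Riemann--Hurwitz then gives $2g(\wtilde{C})-2\ge d(2g(B)-2)+\sum_i d(1-1/m_i)=d\lambda>0$, and since $2g(\wtilde{C})-2$ is an even integer this forces $2g(\wtilde{C})-2\ge2$, i.e.\ $g(C)\ge g(\wtilde{C})\ge 2$ (for either the geometric or the arithmetic genus). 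So your approach is salvageable with no new input, but as written the proposal does not prove the claim it was asked to prove.
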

\begin{proof}
Since $\kappa(X)=1, \rho(X)=2$ and $\kappa(X)$ is a birational invariant, $K_X$ is nef.
By \cite[Proposition IX.2]{Bea96},  we have $K_X^2=0$ and there is a surjective morphism $p: X\longrightarrow B$ over a smooth curve $B$, whose  general fibre  $F$ is an elliptic curve. Suppose $F=\sum_{i=1}^r m_iC_i$ with $m_i\in\bZ_{>0},r\ge2$ is a singular fibre.
Then by Zariski's Lemma (cf. \cite[Lemma III.8.2]{BHPV04}),
\begin{equation*}
(F-m_1C_1)^2<0,C_1^2<0.
\end{equation*}
Therefore, $X$ has at least two negative curves, a contradiction (cf. \cref{2.10}).
As a result, every singular fibre is irreducible and $X$ has exactly one negative curve $C$ since $b(X)>0$.
Moreover, if every fibre is of type $mI_0$, then by  \cref{2.13}, we have $c_2(X)=0$.
Hence, by \cite[Theorem 2.4]{B.etc.13}, we have the following inequality:
\begin{equation*}
                              0<-C^2\le2g(C)-2.
 \end{equation*}
 Thus, $g(C)\ge2$.
 \end{proof}
 \begin{proof}[$Proof~of ~\cref{1.6}$]
By  \cref{2.7,2.8,2.10,2.11,2.12,2.14}, we have completed the proof of  \cref{1.6}.
\end{proof}
We end by asking the following two questions.
 \begin{question}
Is there a positive constant $l$ such that $b(X)\le l$ for any smooth projective surface $X$ with $\rho(X)=2$  and $d(X)=1$ ?
\end{question}
\begin{question}
Let $X$ be a smooth projective surface with Picard number $\rho(X)\ge3$ and $d(X)=1$. Take some negative curves $C_1,\cdots, C_k $ with $k\ge2$ on $X$ such that $I(C_1,\cdots, C_k)$  is negative definite.  Is the determinant $\mathrm{det}(C_i\cdot C_j)_{1\le i,j\le k}$  equal to $(-1)^k$ ?
\end{question}
\section*{Acknowledgments}
The author gratefully acknowledge Prof. De-Qi Zhang  for his useful comments and crucial suggestions. He also thank Prof. Rong Du and Prof. Piotr Pokora for helpful conversations and the referee for several suggestions.



\bibliographystyle{amsalpha}
\bibliography{../mybib}

\providecommand{\bysame}{\leavevmode\hbox to3em{\hrulefill}\thinspace}
\providecommand{\MR}{\relax\ifhmode\unskip\space\fi MR }
\providecommand{\MRhref}[2]{%
  \href{http://www.ams.org/mathscinet-getitem?mr=#1}{#2}
}
\providecommand{\href}[2]{#2}
\begin{thebibliography}{BCHM10}
\bibitem[Bea96]{Bea96}
A. Beauville,
\emph{Complex algebaic surfaces}, 2ed., London Mathematical Society Student Texts, \textbf{34}, Cambridge University Press, Cambridge, 1996.

\bibitem[BCHM10]{BCHM10}
C. Birkar, P. Cascini, C. D. Hacon, and J. McKernan,
\emph{Existence of minimal models for varieties of log general type,}
J. Amer. Math. Soc. \textbf{23}(2) (2010), 405-468.

\bibitem[BHPV04]{BHPV04}
W. P.~Barth, K.~Hulek, C. A. M.~Peters, and A.~Van De Ven,
\emph{Compact Complex Surfaces}, Ergeb. Math. Grenzgeb. Springer-Verlag, Berlin, 2004.

\bibitem[B.etc.13]{B.etc.13}
T.~Bauer, B.~Harbourne, A. L.¡¤Knutsen, A.~K$\mathrm{\ddot{u}}$ronya, S.~M$\mathrm{\ddot{u}}$ller-Stach, X. Roulleau, and T. Szemberg,
\emph{Negative curves on algebraic surfaces,} Duke Math. J. \textbf{162}(2013),1877-1894.

\bibitem[BPS17]{BPS17}
T. Bauer, P. Pokora, and D. Schmitz,
\emph{On the boundedness of the denominators in the Zariski decomposition on surfaces,} J. Reine Angew. Math. \textbf{733}(2017),251-259.

\bibitem[C.etc.13]{C.etc.13}
C. Ciliberto, B. Harbourne, R. Miranda, and J. Ro$\acute e$,
\emph{Variations on Nagata's Conjecture,} Clay Mathematics Proceedings, \textbf{18}, (2013), 185-203.

\bibitem[Fuj79]{Fuj79}
T. Fujita,
\emph{On Zariski problem,} Proc. Japan Acad. Ser. A \textbf{55},(1979), 106--110.

\bibitem[Har77]{Har77}
R. Hartshorne,
\emph{Algebraic Geometry,} Graduate Texts in Mathematics, \textbf{52}, Springer-Verlag, New York, 1977.

\bibitem[HPT15]{HPT15}
B. Harbourne, P. Pokora, and H. Tutaj-Gasinska,
\emph{On integral Zariski decompositions of pseudoeffective divisors on algebraic surfaces,} Electron. Res. Announc. Math. Sci. \textbf{22}(5)(2015),103-108.

\bibitem[Iit82]{Iit82}
S. Iitaka,
\emph{Algebraic Geometry,} Graduate Texts in Mathematics, \textbf{76}, Springer-Verlag, New York, 1982.

\bibitem[Laz04]{Laz04}
R. Lazarsfeld,
\emph{Positivity in algebraic geometry I, II,} Ergeb. Math. Grenzgeb. \textbf{48-49}, Springer-Verlag, Berlin, 2004.

\bibitem[KM98] {KM98}
J. Koll$\mathrm{\acute{a}}$r and S. Mori.
\emph{Birational geometry of algebraic varieties,}
Cambridge Tracts in Mathematics, \textbf{134}, Cambridge University Press, Cambridge, 1998.

 \bibitem[Kov94]{Kov94}
S. J. $\mathrm{Kov\acute acs}$,
\emph{The cone of curves of a K3 surface,} Math. Ann. \textbf{300}(4)(1994),681-691.


\bibitem[Nag59]{Nag59}
M. Nagata,
\emph{On the 14-th problem of Hilbert,} Am. J. Math. \textbf{81}(1959), 766-772.

\bibitem[Zar62]{Zar62}
O. Zariski,
\emph{The theory of Riemann-Roch for high multiples of an effective divisor on an algebraic surface}. Ann. Math. \textbf{76}(1962),560--615.

\end{thebibliography}

\end{document}